\documentclass[12pt]{article}
\usepackage{amssymb,amsmath,latexsym}
\setlength{\oddsidemargin}{-0.25in} 
\setlength{\textwidth}{7in}   
\setlength{\topmargin}{-.75in}  
\setlength{\textheight}{9.2in}  
\newtheorem{thm}{Theorem}[section]

\newtheorem{prop}[thm]{Proposition}

\newtheorem{rem}[thm]{Remark}
\newtheorem{dfn}[thm]{Definition}
\newtheorem{ex}[thm]{Example}
\numberwithin{equation}{section}
\newenvironment{proof}{\trivlist \item[\hskip
	\labelsep{{\it\underline{Proof}.\/}}]}{\ \rule{0.5em}{0.5em}}%
{\endtrivlist}
{\endtrivlist}
\setlength{\parindent}{0in}
\begin{document}
\title{\bf $3$-Leibniz bialgebras ($3$-Lie bialgebras)}
\author{A. Rezaei-Aghdam$^{a}$\thanks{e-mail: rezaei-a@azaruniv.edu}\,\,\, L. Sedghi-Ghadim $^{b}$\thanks{e-mail: sedghi88@azaruniv.edu}\,\,\\ \\
	{\small {$^a${\em Department of Physics, Azarbaijan Shahid Madani
				University , 51745-406, Tabriz, Iran.}}}\\
	{\small {$^b${\em Department of Mathematics, Azarbaijan Shahid Madani
				University , 51745-406, Tabriz, Iran.}}}}
\maketitle
\hspace{13.5pt}
\begin{abstract}
The aim of this paper is to extend the notion of bialgebra for Leibniz algebras (and Lie algebras) to $3$-Leibniz algebras (and $3$-Lie algebras) by use of the cohomology complex of $3$-Leibniz algebras. Also, some theorems about Leibniz bialgebras are extended and proved in the case of $3$-Leibniz bialgebras ($3$-Lie bialgebras). Moreover, a new theorem on the
correspondence between $3$-Leibniz bialgebra and its associated Leibniz bialgebra is proved. Finally, some examples are discussed in detail.
\end{abstract}
\section{\bf Introduction}
\hspace{13.5pt}
From historical point of view Kurosh introduced the notion of multilinear operator algebra for the first time in Refs\cite{Kur1,Kur2}. However, for these algebras
one of the most important consequences of Jacobi identity is overlooked (i.e., the derivation property of $\mathrm{ad}_x$ for an element $x$ of the algebra). Later in \cite{Filippov} Filippov introduced the $n$-Lie algebra which preserves main properties of Jacobi identity. In Ref\cite{Kasymov} the $n$-Lie modules and representation of $n$-Lie algebras, generalization of Engel's and Lie's theorems and also Cartan's criterion for solvability of $n$-Lie algebra have been studied by Kasymov. In the past two decades the study of the $n$-Leibniz algebra, its cohomology \cite{Rotk}, their classifications \cite{Classify, Bai1} and deformation of $n$-Leibniz algebras  are under investigation (see for instance \cite{Azcar1} for a review see \cite{Azcar2}). Recently, the application of $3$-Lie algebra in the M theory \cite{Bag1,Bag2} has led this branch of mathematics to receive the most attention among physicists \cite{Azcar2}. One of the most applicable objects in mathematical physics, especially in integrable systems is the Lie bialgebra. In this manner, the generalization of the concept of Lie bialgebra to the $n$-Lie bialgebra (in general) and especially $3$-Lie bialgebra is a good problem from the abstract point of view. Indeed, there are some attempts in this direction from the co-algebra point of view (see \cite{Morad} and \cite{Bai2}). Here we will study these facts by use of the cohomology of $n$-Leibniz algebra \cite{Rotk} for $3$-Leibniz algebra in general and then for $3$-Lie algebra in particular\footnote{Note that the Lie algebra is a special case of Leibniz algebra \cite{Lod2}.}. The outline of the paper is as follows.
\newline
In Section $2$ for self containing of the paper, we review the basic definitions and theorems on $n$-Leibniz algebra \cite{Lod1}, $n$-Lie algebra, its associated Leibniz algebra \cite{Gaut} and Leibniz bialgebra \cite{Reza1}.
\newline
In Section $3$ after the separation of the first, the second and the third $3$-Leibniz algebra we give the definition of $3$-Leibniz bialgebra $(\mathcal{A},\gamma)$ for the different $i$-th $3$-Leibniz algebra $\mathcal{A}$. Then, as a proposition we show that the dual space $\mathcal{A}^{\ast}$ with $\mu^t$ is a $3$-Leibiz bialgebra. The investigation of $3$-Leibniz bialgebra $(\mathcal{A},\mathcal{A}^\ast)$ in terms of structure constants of $3$-Leibniz algebras $\mathcal{A}$ and $\mathcal{A}^\ast$ are given in Section $4$; and at the end of this section some examples of $3$-Leibniz bialgebras are obtained by using matrix calculations. In Section $5$ the correspondence between $3$-Leibniz bialgebra and its associated Leibniz bialgebra is given as a theorem.  The definition of $3$-Lie bialgebra as an especial case and the reformulation of this definition in terms of structure constants of $3$-Lie algebras $\mathcal{A}$ and $\mathcal{A}^\ast$ are provided in Section $6$. The matrix form of this reformulation is applied for the calculation of some low dimensional $3$-Lie bialgebras at the end of Section 6. Some concluding remarks are given in Section $7$.

\section{\bf Basic definitions and theorems}
For self containing of the paper, let us recall some basic
definitions and theorems about $n$-Leibniz algebras, $n$-Lie algebras and also Leibniz bialgebras \cite{Reza1}.
\begin{dfn}\cite{Lod1}
 A vector space $\mathcal{A}$ equipped with  an $n$-linear operation $[., . . ., .]: \mathcal{A}^{\otimes n}\longrightarrow\mathcal{A}$, such that for all $x_{1}, . . ., x_{n-1}\in\mathcal{A}$ the map $\mathrm{ad}_{(x_{1}, . . ., x_{n-1})}:\mathcal{A}\longrightarrow \mathcal{A}$ which is given by
\begin{align}
\mathrm{ad}_{(x_{1}, . . ., x_{n-1})}(x):=[x,x_{1}, . . ., x_{n-1}],\label{adequation1}
\end{align}
is called an $n$-Leibniz algebra if the map
$\mathrm{ad}_{(x_{1}, . . ., x_{n-1})}:\mathcal{A}\longrightarrow \mathcal{A}$
 be a derivation with respect to $[., . . ., .]$ i.e.
{\small
\begin{align}
[[y_{1}, . . ., y_{n}],x_{1}, . . ., x_{n-1}]=\sum_{i=1}^{n}[y_{1}, . . ., y_{i-1},[y_{i},x_{1}, . . ., x_{n-1}],y_{i+1}, . . ., y_{n}],
\label{fundamental identity1}
\end{align}
}
this identity is called fundamental identity for $n$-Leibniz algebra.
\end{dfn}
\begin{dfn}\cite{Lod1}
A representation of the $n$-Leibniz algebra $\mathcal{A}$  is a vector space $M$ equipped with $n$ actions of
\begin{align*}
\rho_{j}:\mathcal{A}^{\otimes(j-1)}\otimes M\otimes \mathcal{A}^{\otimes(n-j)}\longrightarrow M\qquad j=1,2, . . ., n,
\end{align*}
satisfying $2n-1$ equations, which are obtained from \eqref{fundamental identity1} by letting exactly one of the variables $x_{1}, . . ., x_{n-1},y_{1}, . . ., y_{n}$
be in $M$ and all the others in $\mathcal{A}$. In the other word, $M$ is an $n$-Leibniz module. The notion of representation of an $n$-Leibniz algebra for $n=2$ coincides with the corresponding notion representation of Leibniz algebra in \cite{Lod2}.
\end{dfn}
\begin{thm}\cite{Gaut}
Let $\mathcal{A}$ be an $n$-Leibniz algebra and set $\mathfrak{g}:=\mathcal{A}^{\otimes (n-1)}$ then there is a Leibniz algebra structure on the space
$\mathfrak{g}$ with the following bracket:
{\small
\begin{align}
[x_{1}\otimes . . . \otimes x_{n-1}, y_{1}\otimes . . . \otimes y_{n}]=\sum_{i=1}^{n-1}y_{1}\otimes . . . \otimes y_{i-1}\otimes [x_{1}, . . ., x_{n-1}, y_{i}]
\otimes y_{i+1}\otimes . . . \otimes y_{n-1}.
\end{align}
}
$\mathfrak{g}$ with the above bracket is called associated Leibniz algebra.
\end{thm}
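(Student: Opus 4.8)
The plan is to check directly from the displayed formula that the bilinear bracket on $\mathfrak g=\mathcal A^{\otimes(n-1)}$ satisfies the Leibniz identity; well-definedness and bilinearity need no comment, since the right-hand side of the formula is linear in each of the $2(n-1)$ arguments $x_{1},\ldots,x_{n-1},y_{1},\ldots,y_{n-1}$ separately, hence extends uniquely to a bilinear map on $\mathcal A^{\otimes(n-1)}\times\mathcal A^{\otimes(n-1)}$. Throughout I write $X:=x_{1}\otimes\cdots\otimes x_{n-1}$, and similarly $Y,Z$, for decomposable elements of $\mathfrak g$, and for such an $X$ I set $\phi_{X}:\mathcal A\to\mathcal A$, $\phi_{X}(a):=[x_{1},\ldots,x_{n-1},a]$. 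With this notation $L_{X}:=[X,\cdot\,]$ acts on a decomposable $W=w_{1}\otimes\cdots\otimes w_{n-1}$ by $L_{X}(W)=\sum_{i=1}^{n-1}w_{1}\otimes\cdots\otimes\phi_{X}(w_{i})\otimes\cdots\otimes w_{n-1}$, i.e. $L_{X}$ is the ``total derivation'' of $\mathcal A^{\otimes(n-1)}$ induced by $\phi_{X}$.

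I would verify the Leibniz identity, which with the present conventions takes the form $[X,[Y,Z]]=[[X,Y],Z]+[Y,[X,Z]]$ (with the opposite sidedness convention the computation is its mirror image). On the left-hand side, first expand $[Y,Z]=\sum_{j=1}^{n-1}Z^{(j)}$, where $Z^{(j)}$ is $Z$ with its $j$-th factor $z_{j}$ replaced by $\phi_{Y}(z_{j})=[y_{1},\ldots,y_{n-1},z_{j}]$; then apply $L_{X}$ to each $Z^{(j)}$, which hits the $n-1$ slots in turn and produces a double sum indexed by a ``first hit'' slot $j$ and a ``second hit'' slot $k$. Split this into the off-diagonal part $k\neq j$ (the two inner brackets sit in different slots) and the diagonal part $k=j$ (one inner bracket nested inside the other). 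Expand the right-hand side the same way: in $[[X,Y],Z]$ rewrite $[X,Y]=\sum_{i}Y^{(i)}$ ($Y$ with $y_{i}$ replaced by $\phi_{X}(y_{i})$) and then bracket with $Z$; in $[Y,[X,Z]]$ rewrite $[X,Z]=\sum_{k}\widetilde{Z}^{(k)}$ ($Z$ with $z_{k}$ replaced by $\phi_{X}(z_{k})$) and then apply $L_{Y}$; again collect off-diagonal and diagonal contributions.

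Then I would match the two sides. The off-diagonal terms coincide after a plain relabeling of the two independent summation indices: a left-hand term carrying $\phi_{X}$ in slot $k$ and $\phi_{Y}$ in slot $j$ ($k\neq j$) occurs verbatim in $[Y,[X,Z]]$, while the off-diagonal part of $[[X,Y],Z]$ contributes nothing of this shape, so they cancel. What survives is the diagonal part, and collecting the terms that sit in a fixed slot $j$ reduces the whole identity, slot by slot, to
\begin{align*}
[x_{1},\ldots,x_{n-1},[y_{1},\ldots,y_{n-1},z_{j}]]&=\sum_{i=1}^{n-1}[y_{1},\ldots,[x_{1},\ldots,x_{n-1},y_{i}],\ldots,y_{n-1},z_{j}]\\
&\qquad+[y_{1},\ldots,y_{n-1},[x_{1},\ldots,x_{n-1},z_{j}]].
\end{align*}
This is exactly the assertion that $\phi_{X}$ is a derivation of the $n$-ary bracket, applied to the inner $n$-bracket $[y_{1},\ldots,y_{n-1},z_{j}]$ — i.e. the fundamental identity \eqref{fundamental identity1} with multiplier $(x_{1},\ldots,x_{n-1})$, the first $n-1$ summands on the right coming from $[[X,Y],Z]$ and the last one from $[Y,[X,Z]]$. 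This establishes the Leibniz identity, so $\mathfrak g$ is a Leibniz algebra.

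I expect the only genuine difficulty to be the combinatorial bookkeeping — tracking which of the $n-1$ tensor slots each inner $n$-bracket occupies after each of the two successive bracketings, and keeping the two layers of summation indices apart — together with the purely notational care of aligning the positional convention used for the bracket on $\mathfrak g$ (multiplier in the first $n-1$ arguments) with that of the fundamental identity \eqref{fundamental identity1}. Once the terms are sorted by their ``hit pattern'', nothing remains to be computed beyond a single appeal to \eqref{fundamental identity1}, and no extra hypothesis on $\mathcal A$ is needed.
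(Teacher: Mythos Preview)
Your proof is correct. The paper does not supply its own proof of this theorem: it is quoted from \cite{Gaut} as a known background result, so there is nothing in the paper to compare against. Your direct verification --- writing $L_{X}$ as the derivation of $\mathcal{A}^{\otimes(n-1)}$ induced by $\phi_{X}$, expanding both sides of $[X,[Y,Z]]=[[X,Y],Z]+[Y,[X,Z]]$, matching the off-diagonal terms between the left side and $[Y,[X,Z]]$, and reducing the remaining diagonal terms slot by slot to a single instance of the fundamental identity --- is exactly the standard argument and is carried out cleanly.

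Two small remarks. First, the word ``cancel'' in your off-diagonal discussion is slightly misleading: the off-diagonal terms on the left and in $[Y,[X,Z]]$ \emph{match}, they do not cancel against each other; after removing these matching terms from both sides you are left with the diagonal identity. Second, you correctly flag the convention issue: the bracket in the theorem places the multiplier on the left, $[x_{1},\ldots,x_{n-1},y_{i}]$, which corresponds to what the paper later calls the \emph{third} $n$-Leibniz identity \eqref{t3li}, whereas the fundamental identity \eqref{fundamental identity1} as displayed uses the right-acting convention. Your proof uses the left-acting form consistently, which is the one compatible with the stated bracket, and this is the right choice; it would be worth saying explicitly that the instance of the fundamental identity you invoke is \eqref{nli} with $i=n$ (equivalently \eqref{t3li} for $n=3$), rather than \eqref{fundamental identity1}.
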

\begin{dfn}\cite{ Rotk, Gaut}
Let $\mathcal{A}$ be an $n$-Leibniz algebra and $\mathfrak{g}:=\mathcal{A}^{\otimes (n-1)}$ be its associated Leibniz algebra. The $p$-cochain of $\mathcal{A}$
$(p\geq 1)$ with coefficients in $\mathcal{A}$ is a linear map from $\mathfrak{g}^{\otimes (p-1)}\otimes \mathcal{A}$ to $\mathcal{A}$.
Setting $\Gamma L^{0}(\mathcal{A},\mathcal{A}):=\mathfrak{g}$ for the space of $0$-cochains and $\Gamma L^{p}(\mathcal{A},\mathcal{A})$ for the space of $p$-cochains
the coboundary map is given  by\cite{Rotk}
\begin{align*}
&d^{p}:\Gamma L^{p}(\mathcal{A},\mathcal{A})\longrightarrow \Gamma L^{p+1}(\mathcal{A},\mathcal{A})\\
&(d^{0}(x_{1}\otimes . . . \otimes x_{n-1}))(x)=-[x_{1},  . . ., x_{n-1}, x],
\end{align*}
{\small
\begin{align}
\begin{split}
(d^{p}(\alpha)(X_{1}, . . ., X_{p-1}, Y)
&=\sum_{i=1}^{p-2}\sum_{j=i+1}^{p-1}(-1)^{i}\alpha(X_{1}, . . ., \widehat{X_{i}}, . . ., X_{j-1},[X_{i}, X_{j}], X_{j+1}, . . ., X_{p-1}, Y)\\
& \;\;+\sum_{i=1}^{p-1}(-1)^{i}\alpha(X_{1}, . . ., \widehat{X_{i}}, . . ., X_{p-1},\{X_{i}, Y\})\\
&\;\; +(-1)^{p}\alpha(X_{1}, . . ., X_{p-1}, [y_{1},
 . . . , y_{n}])\\
&\;\;+\sum_{i=1}^{p-1}(-1)^{i+1}\{X_{i}, \alpha(X_{1}, . . ., \widehat{X_{i}}, . . ., X_{p-1}, Y)\}\\
&\;\; +(-1)^{p+1}\sum_{i=1}^{n}[y_{1}, . . ., y_{i-1},\alpha(X_{1}, . . . , X_{p-1}, y_{i}), . . ., y_{n}],
\end{split}
\end{align}
}
where $\alpha\in\Gamma L^{p}(\mathcal{A},\mathcal{A})$,   $X_{i}\in \mathfrak{g}$\;  for  $i=1, . . ., p-1$,  $Y=y_{1}\otimes . . . \otimes y_{n}\in\mathcal{A}^{\otimes n}$ and for
$X=x_{1}\otimes . . . \otimes x_{n-1}$  set $\{X,Y\}:=\sum_{i=1}^{n}y_{1}\otimes . . . \otimes y_{i-1}\otimes[x_{1}, . . ., x_{n-1},y_{i}]\otimes . . . \otimes y_{n}$.
\end{dfn}
\begin{dfn}\cite{Reza1} A  Leibniz bialgebra $(\mathfrak{g},\delta)$ is a (right or left) Leibniz algebra $\mathfrak{g}$ with a linear map (cocommutator) $\delta:\mathfrak{g}\longrightarrow \mathfrak{g}\otimes \mathfrak{g}$ such that
\begin{itemize}
\item
$\delta$ is a $1$-cocycle on $\mathfrak{g}$ with values in $\mathfrak{g}\otimes \mathfrak{g}$
\begin{align}
[X,\delta(Y)]_{L}+[\delta(X),Y]_{R}-\delta([X,Y])=0,\label{1-cocycleforL}
\end{align}
where $[.,.]_{L}$ and $[.,.]_{R}$ represent the left and  the right action of  $\mathfrak{g}$ on $\mathfrak{g}\otimes \mathfrak{g}$ respectively such that $\mathfrak{g}\otimes \mathfrak{g}$ becomes a $\mathfrak{g}$-module.
\item
$\delta ^{t}:\mathfrak{g}^{\ast}\otimes \mathfrak{g}^{\ast}\longrightarrow \mathfrak{g}^{\ast}$ defines a Leibniz bracket on $\mathfrak{g}^{\ast}$.
With the notation $[\xi,\eta]_{\ast}=\delta^{t}(\xi\otimes \eta)$, for any $\xi,\eta\in\mathfrak{g}^{\ast}$ and $X\in\mathfrak{g}$ we will have
\begin{align}
\langle [\xi,\eta]_{\ast},X\rangle=\langle\delta^{t}(\xi\otimes \eta),X\rangle=\langle \xi\otimes \eta,\delta(X),\rangle,
\end{align}
where $\langle ,\rangle$ is the natural pairing between $\mathfrak{g}$ and $\mathfrak{g}^{\ast}$.
\end{itemize}
\end{dfn}
Note that with respect to the type of the Leibniz algebra $\mathfrak{g}$ and also its actions on the $\mathfrak{g}\otimes \mathfrak{g}$; the $1$-cocycle condition \eqref{1-cocycleforL} can be rewritten in the following forms:
{\small
\begin{align}
&\delta([X,Y])
=(\mathrm{ad}_{X}^{(l)}\otimes 1)(\delta(Y))+(\mathrm{ad}_{Y}^{(r)}\otimes 1)(\delta(X)),\label{1-cocycle1lorr-r}\\
&\delta([X,Y])
=(1\otimes \mathrm{ad}_{Y}^{(r)}+\mathrm{ad}_{Y}^{(r)}\otimes 1)(\delta(X)),\label{1-cocycler-lorr}\\
& \delta([X,Y])
=(1\otimes \mathrm{ad}_{X}^{(l)}+ \mathrm{ad}_{X}^{(l)}\otimes 1)(\delta(Y)),\label{1-cocyclel-lorr}\\
&\delta([X,Y])
=(1\otimes \mathrm{ad}_{X}^{(l)})(\delta(Y))+(1\otimes \mathrm{ad}_{Y}^{(r)})(\delta(X)),\label{1-cocyclelorr-l}
\end{align}
}
where for the cases \eqref{1-cocycle1lorr-r} and \eqref{1-cocyclelorr-l} $\mathfrak{g}$ can be a left or  a right Leibniz algebra and in the case \eqref{1-cocycler-lorr} (\eqref{1-cocyclel-lorr}) $\mathfrak{g}$ is a right (left) Leibniz algebra.
\begin{dfn}\cite{Filippov} An $n$-Lie algebra $(\mathcal{A},[., . . ., .])$ is a vector
space over a field $\mathbb{F}$ together with a skew-symmetric $n$-linear map
$[., . . ., .]:\mathcal{A}^{\otimes n}\longrightarrow \mathcal{A}$ such that
\begin{align*}
[x_1, . . ., x_{n-1},[y_1, . . ., y_n]]=\sum_{i=1}^{n}[y_1, . . ., [x_1, . . ., x_{n-1},y_i],y_{i+1}, . . ., y_n],
\end{align*}
for all $x_1, . . ., x_{n-1},y_1, . . ., y_n\in\mathcal{A}$. This condition is called the fundamental identity or the Filippov identity.
\end{dfn}
\begin{dfn}\cite{Kasymov}\label{rep3l}
If $\mathcal{A}$ and $V$ be an $n$-Lie algebra  and a vector space over a field  $\mathbb{F}$ respectively, then a polylinear mapping $\rho:\mathcal{A}^{\otimes n-1}\longrightarrow End(V)$	will be said to be a representation of $\mathcal{A}$ in $V$ if the operators
$\rho(x_1, . . ., x_{n-1}),\forall x_i\in\mathcal{A}$ be skew-symmetric functions of their arguments and satisfy in the following identities:
\begin{align}
&[\rho(x_1, . . ., x_{n-1}), \rho(y_1, . . ., y_{n-1})]=\sum_{i=1}^{n-1}\rho(y_1, . . ., y_{i-1},[x_1, . . ., x_{n-1},y_i],y_{i+1}, . . ., y_{n-1}),\label{c1}\\
&\rho(x_1, . . ., x_{n-2},[y_1, . . ., y_n])=\sum_{i=1}^{n}(-1)^{i+1}\rho(y_1, . . ., \hat{y_i}, . . ., y_n)\rho(x_1, . . ., x_{n-2},y_i),\label{c2}
\end{align}
where $x_1, . . ., x_{n-1},y_1, . . ., y_{n-1}\in \mathcal{A}$. In this case, $V$ is said to be an ($n$-Lie) $\mathcal{A}$-module. For $n=3$ we have
\begin{align}
&[\rho(x_1, x_2),\rho(y_1,y_2)]=\rho([x_1,  x_2, y_1], y_2)+\rho(y_1, [x_1, x_2, y_2]),\\
&\rho(x_1, [y_1, y_2, y_3])=\rho(y_2, y_3)\rho(x_1, y_2)-\rho(y_1, y_3)\rho(x_1, y_2)+\rho(y_1, y_2)\rho(x_1, y_3).
\end{align}	
\end{dfn}
\begin{dfn}\cite{Gaut}\label{cohol}
Let $\mathcal{A}$ be a $3$-Lie algebra, an $\mathcal{A}$-valued $p$-cochain is a linear map \linebreak $\psi:(\mathcal{A}\otimes \mathcal{A})^{\otimes(p-1)}\otimes \mathcal{A}\longrightarrow \mathcal{A}$ such that the coboundary operator is given by:
\begin{align*}
d^{p}\psi(x_1, . . ., x_{2p+1})=&\sum_{j=1}^{p}\sum_{k=2j+1}^{2p+1}(-1)^{j}\psi(x_1, . . ., \hat{x}_{j-1},\hat{x}_j, . . ., [x_{2j-1},x_{2j},x_k], . . ., x_{2p+1})\\
&+\sum_{k=1}^{p}[x_{2k-1},x_{2k},\psi(x_1, . . ., \hat{x}_{2k-1},\hat{x}_{2k}, . . ., x_{2p+1})]\\
&+(-1)^{p+1}[x_{2p-1},\psi(x_1, . . ., x_{2p-2},x_{2p}),x_{2p+1}]\\
&+(-1)^{p+1}[\psi(x_1, . . ., x_{2p-1}),x_{2p},x_{2p+1}]\nonumber
\end{align*}
\end{dfn}
\section{\bf  $3$-Leibniz bialgebras}
\hspace{13.5pt}
Since the bracket $[., . . ., .]$ for the $n$-Leibniz algebra is not antisymmetric hence we define the map $\mathrm{ad}_{(x_{1}, . . ., \widehat{x_{i}}, . . ., x_{n})}:\mathcal{A}\longrightarrow \mathcal{A}$, for all $x_{1}, . . ., x_{n}$ in $\mathcal{A}$
 as follows:
\begin{align}
\mathrm{ad}_{(x_{1}, . . ., \widehat{x_{i}}, . . ., x_{n})}(x):=[x_{1}, . . ., x_{i-1},x,x_{i+1}, . . ., x_{n}],\quad\mbox{for $i=1, . . .,  n$}.\label{cadequation}
\end{align}
\begin{dfn}
An {\textit{$i$-th $n$-Leibniz algebra}}, is a vector space $\mathcal{A}$ equipped with an $n$-linear operation $[., . . ., .]: \mathcal{A}^{\otimes n}\longrightarrow\mathcal{A}$ such that the map $\mathrm{ad}_{(x_{1}, . . ., \widehat{x_{i}}, . . ., x_{n})}$ is a derivation with respect to $[., . . ., .]$  i.e.
{\footnotesize
\begin{align}
[x_{1}, . . ., x_{i-1},[y_{1}, . . .,  y_{n}],x_{i+1}, . . .  ,x_{n}]=\sum_{j=1}^{n}[y_{1}, . . ., y_{j-1},[x_{1}, . . ., x_{i-1},y_{j},x_{i+1}, . . ., x_{n}]
,y_{j+1}, . . ., y_{n}],\label{nli}
\end{align}
}
therefore, for any $i$ we have  an  $n$-Leibniz algebra.
\end{dfn}
\begin{rem}
In \cite{Lod1} and \cite{Rotk} the map $\mathrm{ad}_{(x_{1}, . . ., \widehat{x_{i}}, . . ., x_{n})}$  is considered as a derivation with respect to $[., . . ., .]$ for the cases $i=n$ and $i=1$. In the other words, the $i$-th $n$-Leibniz algebras for $i=2, . . ., n-1$ have not been considered.
\end{rem}
For $n=3$ we have three types of $3$-Leibniz identities
\begin{itemize}
\item
 If the map $\mathrm{ad}_{(\widehat{x_{1}},x_{2},x_{3})}$ be a derivation with respect to $[.,.,.]$ then we will have the first $3$-Leibniz identity as follows:
{\small
\begin{align}
[[y_{1}, y_{2}, y_{3}], x_{2}, x_{3}]=[[y_{1}, x_{2}, x_{3}], y_{2}, y_{3}]+[y_{1}, [y_{2}, x_{2}, x_{3}], y_{3}]+[y_{1}, y_{2}, [y_{3}, x_{2}, x_{3}]]. \label{f3li}
\end{align}
}
\item
If the map $\mathrm{ad}_{(x_{1},\widehat{x_{2}},x_{3})}$ be a derivation with respect to $[.,.,.]$ then we will have the second $3$-Leibniz identity as follows:
{\small
\begin{align}
[x_{1},[y_{1},y_{2},y_{3}],x_{3}]=[[x_{1},y_{1},x_{3}],y_{2},y_{3}]+[y_{1},[x_{1},y_{2},x_{3}],y_{3}]+[y_{1},y_{2},[x_{1},y_{3},x_{3}]].\label{s3li}
\end{align}
}
\item
If the map $\mathrm{ad}_{(x_{1},x_{2},\widehat{x_{3}})}$ be a derivation with respect to $[.,.,.]$ then we will have the third $3$-Leibniz identity as follows:
{\small
\begin{align}
[x_{1},x_{2},[y_{1},y_{2},y_{3}]]=[[x_{1},x_{2},y_{1}],y_{2},y_{3}]+[y_{1},[x_{1},x_{2},y_{2}],y_{3}]+[y_{1},y_{2},[x_{1},x_{2},y_{3}]].\label{t3li}
\end{align}
}
\end{itemize}
Before defining the $3$-Leibniz bialgebra let us define special actions such that $\mathcal{A}^{\otimes 3}$ be a $3$-Leibniz module.
We define the following cases of actions for any $x_1, x_2, x_3, y_1, y_2, y_3$ in $\mathcal{A}$ such that $\mathcal{A}^{\otimes 3}$ be a $3$-Leibniz module.
\begin{itemize}
\item
If $\mathcal{A}$ be the first $3$-Leibniz algebra.
{\small
\begin{align}
&\rho_{1}:\mathcal{A}^{\otimes 3}\otimes \mathcal{A}^{\otimes 2}\longrightarrow \mathcal{A}^{\otimes 3},\nonumber\\
&\rho_{1}(y_{1}\otimes y_{2}\otimes y_{3},x_{2},x_{3}):=\mathrm{ad}_{(\widehat{x_{1}},x_{2},x_{3})}^{(3)}(y_{1}\otimes y_{2}\otimes y_{3}),\nonumber\\
&=(\mathrm{ad}_{(\widehat{x_{1}},x_{2},x_{3})}\otimes 1\otimes 1+1\otimes \mathrm{ad}_{(\widehat{x_{1}},x_{2},x_{3})}\otimes 1+1\otimes 1\otimes \mathrm{ad}_{(\widehat{x_{1}},x_{2},x_{3})})(y_{1}\otimes y_{2}\otimes y_{3})\nonumber\\
&=[y_{1},x_{2},x_{3}]\otimes y_{2}\otimes y_{3}+y_{1}\otimes [y_{2},x_{2},x_{3}]\otimes y_{3}+y_{1}\otimes y_{2}\otimes [y_{3},x_{2},x_{3}],\nonumber\\
&\rho_{2}:\mathcal{A}\otimes \mathcal{A}^{\otimes 3}\otimes \mathcal{A}\longrightarrow \mathcal{A}^{\otimes 3},\nonumber\\
&\rho_{2}(x_{1},y_{1}\otimes y_{2}\otimes y_{3},x_{3})=0,\nonumber\\
&\rho_{3}:\mathcal{A}^{\otimes 2}\otimes \mathcal{A}^{\otimes 3}\longrightarrow \mathcal{A}^{\otimes 3},\nonumber\\
&\rho_{3}(x_{1},x_{2},y_{1}\otimes y_{2}\otimes y_{3})=0.\label{r1f}
\end{align}
}
{\small
\begin{align}
&\rho_{1}:\mathcal{A}^{\otimes 3}\otimes \mathcal{A}^{\otimes 2}\longrightarrow \mathcal{A}^{\otimes 3},\nonumber\\
&\rho_{1}(y_{1}\otimes y_{2}\otimes y_{3},x_{2},x_{3}):=(\mathrm{ad}_{(\widehat{x_{1}},x_{2},x_{3})}\otimes 1\otimes 1)(y_{1}\otimes y_{2}\otimes y_{3})
=[y_{1},x_{2},x_{3}]\otimes y_{2}\otimes y_{3},\nonumber\\
&\rho_{2}:\mathcal{A}\otimes \mathcal{A}^{\otimes 3}\otimes \mathcal{A}\longrightarrow \mathcal{A}^{\otimes 3},\nonumber\\
&\rho_{2}(x_{1},y_{1}\otimes y_{2}\otimes y_{3},x_{3})=(\mathrm{ad}_{(x_{1},\widehat{x_{2}},x_{3})}\otimes 1\otimes 1)(y_{1}\otimes y_{2}\otimes y_{3})
=[x_{1},y_{1},x_{3}]\otimes y_{2}\otimes y_{3},\nonumber\\
&\rho_{3}:\mathcal{A}^{\otimes 2}\otimes \mathcal{A}^{\otimes 3}\longrightarrow \mathcal{A}^{\otimes 3},\nonumber\\
&\rho_{3}(x_{1},x_{2},y_{1}\otimes y_{2}\otimes y_{3})=(\mathrm{ad}_{(x_{1},x_{2},\widehat{x_{3}})}\otimes 1\otimes 1)(y_{1}\otimes y_{2}\otimes y_{3})
=[x_{1},x_{2},y_{1}]\otimes y_{2}\otimes y_{3}.\label{r2f}
\end{align}
}
{\small
\begin{align}
&\rho_{1}:\mathcal{A}^{\otimes 3}\otimes \mathcal{A}^{\otimes 2}\longrightarrow \mathcal{A}^{\otimes 3},\nonumber\\
&\rho_{1}(y_{1}\otimes y_{2}\otimes y_{3},x_{2},x_{3}):=(1\otimes \mathrm{ad}_{(\widehat{x_{1}},x_{2},x_{3})}\otimes 1)(y_{1}\otimes y_{2}\otimes y_{3})
=y_{1}\otimes [y_{2},x_{2},x_{3}]\otimes y_{3},\nonumber\\
&\rho_{2}:\mathcal{A}\otimes \mathcal{A}^{\otimes 3}\otimes \mathcal{A}\longrightarrow \mathcal{A}^{\otimes 3},\nonumber\\
&\rho_{2}(x_{1},y_{1}\otimes y_{2}\otimes y_{3},x_{3})=(1\otimes \mathrm{ad}_{(x_{1},\widehat{x_{2}},x_{3})}\otimes 1)(y_{1}\otimes y_{2}\otimes y_{3})
=y_{1}\otimes [x_{1},y_{2},x_{3}]\otimes y_{3},\nonumber\\
&\rho_{3}:\mathcal{A}^{\otimes 2}\otimes \mathcal{A}^{\otimes 3}\longrightarrow \mathcal{A}^{\otimes 3},\nonumber\\
&\rho_{3}(x_{1},x_{2},y_{1}\otimes y_{2}\otimes y_{3})=(1\otimes \mathrm{ad}_{(x_{1},x_{2},\widehat{x_{3}})}\otimes 1)(y_{1}\otimes y_{2}\otimes y_{3})
=y_{1}\otimes [x_{1},x_{2},y_{2}]\otimes y_{3}.\label{r3f}
\end{align}
}
{\small
\begin{align}
&\rho_{1}:\mathcal{A}^{\otimes 3}\otimes \mathcal{A}^{\otimes 2}\longrightarrow \mathcal{A}^{\otimes 3},\nonumber\\
&\rho_{1}(y_{1}\otimes y_{2}\otimes y_{3},x_{2},x_{3}):=(1\otimes 1 \otimes \mathrm{ad}_{(\widehat{x_{1}},x_{2},x_{3})})(y_{1}\otimes y_{2}\otimes y_{3})=y_{1}\otimes y_{2}\otimes [y_{3},x_{2},x_{3}],\nonumber\\
&\rho_{2}:\mathcal{A}\otimes \mathcal{A}^{\otimes 3}\otimes \mathcal{A}\longrightarrow \mathcal{A}^{\otimes 3},\nonumber\\
&\rho_{2}(x_{1},y_{1}\otimes y_{2}\otimes y_{3},x_{3})=(1\otimes 1 \otimes \mathrm{ad}_{(x_{1},\widehat{x_{2}},x_{3})})(y_{1}\otimes y_{2}\otimes y_{3})=y_{1}\otimes y_{2}\otimes [x_{1},y_{3},x_{3}],\nonumber\\
&\rho_{3}:\mathcal{A}^{\otimes 2}\otimes \mathcal{A}^{\otimes 3}\longrightarrow \mathcal{A}^{\otimes 3},\nonumber\\
&\rho_{3}(x_{1},x_{2},y_{1}\otimes y_{2}\otimes y_{3})=(1\otimes 1 \otimes \mathrm{ad}_{(x_{1},x_{2},\widehat{x_{3}})})(y_{1}\otimes y_{2}\otimes y_{3})=y_{1}\otimes y_{2}\otimes [x_{1},x_{2},y_{3}].\label{r4f}
\end{align}
}
It is easy to check that  $\mathcal{A}^{\otimes 3}$ with above actions is a $3$-Leibniz module.
\item
 If $\mathcal{A}$ be the second $3$-Leibniz algebra then we will have the actions \eqref{r2f}-\eqref{r4f} and the following action:
{\small
\begin{align}
&\rho_{1}:\mathcal{A}^{\otimes 3}\otimes \mathcal{A}^{\otimes 2}\longrightarrow \mathcal{A}^{\otimes 3},\nonumber\\
&\rho_{1}(y_{1}\otimes y_{2}\otimes y_{3},x_{2},x_{3})=0,\nonumber\\
&\rho_{2}:\mathcal{A}\otimes \mathcal{A}^{\otimes 3}\otimes \mathcal{A}\longrightarrow \mathcal{A}^{\otimes 3},\nonumber\\
&\rho_{2}(x_{1},y_{1}\otimes y_{2}\otimes y_{3},x_{3}):=\mathrm{ad}_{(x_{1},\widehat{x_{2}},x_{3})}^{(3)}(y_{1}\otimes y_{2}\otimes y_{3})\nonumber\\
&=(\mathrm{ad}_{(x_{1},\widehat{x_{2}},x_{3})}\otimes 1\otimes 1+1\otimes \mathrm{ad}_{(x_{1},\widehat{x_{2}},x_{3})}\otimes 1+1\otimes 1\otimes \mathrm{ad}_{(x_{1},\widehat{x_{2}},x_{3})})(y_{1}\otimes y_{2}\otimes y_{3})\nonumber\\
&=[x_{1},y_{1},x_{3}]\otimes y_{2}\otimes y_{3}+y_{1}\otimes [x_{1},y_{2},x_{3}]\otimes y_{3}+y_{1}\otimes y_{2}\otimes [x_{1},y_{3},x_{3}],\nonumber\\
&\rho_{3}:\mathcal{A}^{\otimes 2}\otimes \mathcal{A}^{\otimes 3}\longrightarrow \mathcal{A}^{\otimes 3},\nonumber\\
&\rho_{3}(x_{1},x_{2},y_{1}\otimes y_{2}\otimes y_{3})=0.\label{r1s}
\end{align}
}
It is easily seen that $\mathcal{A}^{\otimes 3}$ is a $3$-Leibniz module.
\item
 If $\mathcal{A}$ be the third $3$-Leibniz algebra then we will have the actions \eqref{r2f}-\eqref{r4f} and the following action:
{\small
\begin{align}
&\rho_{1}:\mathcal{A}^{\otimes 3}\otimes \mathcal{A}^{\otimes 2}\longrightarrow \mathcal{A}^{\otimes 3},\nonumber\\
&\rho_{1}(y_{1}\otimes y_{2}\otimes y_{3},x_{2},x_{3})=0,\nonumber\\
&\rho_{2}:\mathcal{A}\otimes \mathcal{A}^{\otimes 3}\otimes \mathcal{A}\longrightarrow \mathcal{A}^{\otimes 3},\nonumber\\
&\rho_{2}(x_{1},y_{1}\otimes y_{2}\otimes y_{3},x_{3})=0,\nonumber\\
&\rho_{3}(x_{1},y_{1}\otimes y_{2}\otimes y_{3},x_{3}):=\mathrm{ad}_{(x_{1},x_{2},\widehat{x_{3}})}^{(3)}(y_{1}\otimes y_{2}\otimes y_{3})\nonumber\\
&=(\mathrm{ad}_{(x_{1},x_{2},\widehat{x_{3}})}\otimes 1\otimes 1+1\otimes \mathrm{ad}_{(x_{1},x_{2},\widehat{x_{3}})}\otimes 1+1\otimes 1\otimes \mathrm{ad}_{(x_{1},x_{2},\widehat{x_{3}})})(y_{1}\otimes y_{2}\otimes y_{3})\nonumber\\
&=[x_{1},x_{2},y_{1}]\otimes y_{2}\otimes y_{3}+y_{1}\otimes [x_{1},x_{2},y_{2}]\otimes y_{3}+y_{1}\otimes y_{2}\otimes [x_{1},x_{2},y_{3}].\label{r2t}
\end{align}
}
It is obvious that $\mathcal{A}$ is a $3$-Leibniz module.
\end{itemize}
In the following definition we suppose that $\mathcal{A}$ be a $3$-Leibniz algebra and
$\mathfrak{g}=\mathcal{A}\otimes \mathcal{A}$ be its associated Leibniz algebra and $M$ be a representation of $\mathcal{A}$. We generalize the $p$-cochain of $\mathcal{A}$
$(p\geq 1)$ with coefficients in $\mathcal{A}$ to  $p$-cochain of $\mathcal{A}$ $(p\geq 1)$ with coefficients in $M$ and also the corresponding coboundary map is defined.
\begin{dfn} Since we have three types of $3$-Leibniz algebra we define the cohomology complexs for them separately.
\begin{enumerate}
\item
If $\mathcal{A}$ be the first $3$-Leibniz algebra then $\mathfrak{g}=\mathcal{A}\otimes \mathcal{A}$ with the following bracket
\begin{align*}
[x_1\otimes x_2,y_1\otimes y_2]=[x_1,y_1,y_2]\otimes x_2+x_1\otimes[x_2,y_1,y_2],
\end{align*}
will be a right Leibniz algebra. The  $p$-cochain of $\mathcal{A}$
$(p\geq 1)$ with the coefficients in $M$ is a linear map from $\mathcal{A}\otimes
\mathfrak{g}^{\otimes (p-1)}$ to $M$. Setting  $\Gamma L^{0}(\mathcal{A},M):=\mathcal{A}\otimes M$ the space of $p$-cochains is denoted by
$\Gamma L^{p}(\mathcal{A},M)$. The coboundary map is given  by
\begin{align*}
&d^{p}:\Gamma L^{p}(\mathcal{A},M)\longrightarrow \Gamma L^{p+1}(\mathcal{A},M)\\
&d^{0}(x\otimes m)(y)=-\rho_3(y,x,m),\quad\forall x,y\in\mathcal{A},\forall m\in M
\end{align*}
{\small
\begin{align}
d^{p}(\alpha)(Y,X_{1}, . . ., X_{p-1})&=\sum_{i=1}^{p-2}
\sum_{j=i+1}^{p-1}(-1)^{i}\alpha(Y,X_{1}, . . ., \widehat{X_{i}}
, . . ., X_{j-1},[X_{i},X_{j}],X_{j+1}, . . ., X_{p-1})\nonumber\\
&\;\;+\sum_{i=1}^{p-1}(-1)^{i}\alpha(\{X_{i},Y\},X_{1}, . . ., \widehat{X_{i}}, . . ., X_{p-1})\nonumber\\
&\;\;+(-1)^{p}\alpha([y_1,y_2,y_3],X_{1}, . . ., X_{p-1})\nonumber\\
&\;\;+\sum_{i=1}^{p-1}(-1)^{i+1}\rho_1(\alpha(Y,X_{1}, . . ., \widehat{X_{i}}, . . ., X_{p-1}),X_i)\nonumber\\
&\;\;+(-1)^{p+1}\sum_{i=1}^{3}\rho_i(y_{1}, . . ., y_{i-1},\alpha(y_i,X_{1}, . . .,  X_{p-1}), . . ., y_{3}),
\end{align}
}
where $X_{i}\in \mathfrak{g}$ $(i=1, . . ., p-1)$, $Y=y_{1}\otimes y_2\otimes y_3\in\mathcal{A}^{\otimes 3}$ and for $X_i=x_i^1\otimes x_i^2$ we set $\{X_i,Y\}:=\sum_{i=1}^{3}y_{1}\otimes . . . \otimes y_{i-1}\otimes[y_i, x_i^1, x_i^2]\otimes . . . \otimes y_3$. In this case we have
{\small
\begin{align*}
& d^{1}:\Gamma L^{1}(\mathcal{A},M)\longrightarrow \Gamma L^{2}(\mathcal{A},M)\\
&d^{1}(\alpha)(y_{1}\otimes y_{2}\otimes y_{3})=-\alpha([y_{1},y_{2},y_{3}])+\rho_{1}(\alpha(y_{1}),y_{2},y_{3})+\rho_{2}(y_{1},\alpha(y_{2}),y_{3})+\rho_{3}(y_{1},y_{2},\alpha(y_{3}))
\end{align*}
}
\item
If $\mathcal{A}$ be the second $3$-Leibniz algebra then $\mathfrak{g}=\mathcal{A}\otimes \mathcal{A}$ with the following bracket
\begin{align*}
[x_1\otimes x_2,y_1\otimes y_2]=[x_1,y_1, x_2]\otimes y_2+y_1\otimes[x_1,y_2, x_2],
\end{align*}
will be a left Leibniz algebra and with the following bracket will be a right Leibniz algebra.
\begin{align*}
[x_1\otimes x_2,y_1\otimes y_2]=[y_1, x_1,y_2]\otimes x_2+x_1\otimes[y_1, x_2,y_2].
\end{align*}
The  $p$-cochain of $\mathcal{A}$
$(p\geq 2)$ with the coefficients in $M$ is a linear map from $\mathcal{A}^{\otimes 3}\otimes\mathfrak{g}^{\otimes (p-2)}$ to $M$. Set also $\Gamma L^{0}(\mathcal{A},M):=M\otimes\mathcal{A}$ and $\Gamma L^{1}(\mathcal{A},M)$ is a linear map from
$\mathcal{A}$ to $M$.  We will denote by $\Gamma L^{p}(\mathcal{A},M)$ the space of $p$-cochains. The coboundary map is given  by
\begin{align*}
&d^{p}:\Gamma L^{p}(\mathcal{A},M)\longrightarrow \Gamma L^{p+1}(\mathcal{A},M)\\
&d^{0}(m\otimes x)(y)=-\rho_1(m,y,x),\quad\forall x,y\in\mathcal{A},\forall m\in M
\end{align*}
{\small
\begin{align}
d^{p}(\alpha)(Y,X_{1}, . . ., X_{p-1})
&=\sum_{i=1}^{p-2}\sum_{j=i+1}^{p-1}(-1)^{i}\alpha(Y,X_{1}, . . ., \widehat{X_{i}}, . . ., X_{j-1},[X_{i},X_{j}],X_{j+1}, . . ., X_{p-1})\nonumber\\
&\;\;+\sum_{i=1}^{p-1}(-1)^{i}\alpha(\{X_{i},Y\},X_{1}, . . ., \widehat{X_{i}}, . . ., X_{p-1})\nonumber\\
&\;\;+(-1)^{p}\alpha(x_1^{1},[y_1,y_2,y_3],x_1^{2},X_2, . . ., X_{p-1})\nonumber\\
&\;\;+\sum_{i=1}^{p-1}(-1)^{i+1}\rho_2(x_i^{1},\alpha(Y,X_{1}, . . ., \widehat{X_{i}}, . . ., X_{p-1}),x_i^{2})\nonumber\\
&\;\;+(-1)^{p+1}\sum_{i=1}^{3}\rho_i(y_{1}, . . ., y_{i-1},\alpha(x_i^{1},y_i, x_i^{2}, . . .,  X_{p-1}), . . ., y_{3}),
\end{align}
}
where $X_{i}\in \mathfrak{g}$ $(i=1, . . ., p-1)$, $Y=y_{1}\otimes y_2\otimes y_3\in\mathcal{A}^{\otimes 3}$ and for $X_i=x_i^{1}\otimes x_i^{2}$ we set $\{X_i,Y\}:=\sum_{j=1}^{3}y_{1}\otimes . . . \otimes y_{j-1}\otimes[x_i^{1},y_j, x_i^{2}]\otimes . . . \otimes y_3$. In this case we have
{\small
\begin{align*}
& d^{1}:\Gamma L^{1}(\mathcal{A},M)\longrightarrow \Gamma L^{2}(\mathcal{A},M)\nonumber\\
&d^{1}(\alpha)(y_{1}\otimes y_{2}\otimes y_{3})=-\alpha([y_{1},y_{2},y_{3}])+\rho_{1}(\alpha(y_{1}),y_{2},y_{3})+\rho_{2}(y_{1},\alpha(y_{2}),y_{3})+\rho_{3}(y_{1},y_{2},\alpha(y_{3}))
\end{align*}
}
\item
If $\mathcal{A}$ be the third $3$-Leibniz algebra then $\mathfrak{g}=\mathcal{A}\otimes \mathcal{A}$ with the following bracket
\begin{align*}
[x_1\otimes x_2,y_1\otimes y_2]=[x_1, x_2,y_1]\otimes y_2+y_1\otimes[x_1, x_2,y_2],
\end{align*}
will be a left Leibniz algebra. The  $p$-cochain of $\mathcal{A}$
$(p\geq 1)$ with coefficients in $M$ is a linear map from $\mathfrak{g}^{\otimes (p-1)}\otimes \mathcal{A}$ to $M$. Set also $\Gamma L^{0}(\mathcal{A},M):=M\otimes\mathcal{A}$. The space of $p$-cochains is denoted by
$\Gamma L^{p}(\mathcal{A},M)$. The coboundary map is given  by
\begin{align*}
&	d^{p}:\Gamma L^{p}(\mathcal{A},M)\longrightarrow \Gamma L^{p+1}(\mathcal{A},M)\\
&d^{0}(m\otimes x)(y)=-\rho_1(m,x,y),\quad\forall x,y\in\mathcal{A},\forall m\in M
\end{align*}
{\small
\begin{align}
d^{p}(\alpha)(X_{1}, . . ., X_{p-1},Y)
&=\sum_{i=1}^{p-2}\sum_{j=i+1}^{p-1}(-1)^{i}\alpha(X_{1}, . . ., \widehat{X_{i}}, . . ., X_{j-1},[X_{i},X_{j}],X_{j+1}, . . ., X_{p-1},Y)\nonumber\\
&\;\;+\sum_{i=1}^{p-1}(-1)^{i}\alpha(X_{1}, . . ., \widehat{X_{i}}, . . ., X_{p-1},\{X_{i},Y\})\nonumber\\
&\;\;+(-1)^{p}\alpha(X_{1}, . . ., X_{p-1},[y_1,y_2,y_3])\nonumber\\
&\;\;+\sum_{i=1}^{p-1}(-1)^{i+1}\rho_3(X_i,\alpha(X_{1}, . . ., \widehat{X_{i}}, . . ., X_{p-1},Y))\nonumber\\
&\;\;+(-1)^{p+1}\sum_{i=1}^{3}\rho_i(y_{1}, . . ., y_{i-1},\alpha(X_{1}, . . .,  X_{p-1},y_i), . . ., y_{3}),
\end{align}
}
where $X_{i}\in \mathfrak{g}$  $(i=1, . . ., p-1)$, $Y=y_{1}\otimes y_2\otimes y_3\in\mathcal{A}^{\otimes 3}$ and for
$X_i=x_i^{1}\otimes x_i^{2}$ we set $\{X_i,Y\}:=\sum_{j=1}^{3}y_{1}\otimes . . . \otimes y_{j-1}\otimes[x_i^{1},x_i^{2},y_j]\otimes . . . \otimes y_3$. In this case we have
{\small
\begin{align*}
& d^{1}:\Gamma L^{1}(\mathcal{A},M)\longrightarrow \Gamma L^{2}(\mathcal{A},M)\\
&d^{1}(\alpha)(y_{1}\otimes y_{2}\otimes y_{3})=-\alpha([y_{1},y_{2},y_{3}])+\rho_{1}(\alpha(y_{1}),y_{2},y_{3})+\rho_{2}(y_{1},\alpha(y_{2}),y_{3})+\rho_{3}(y_{1},y_{2},\alpha(y_{3}))
\end{align*}
}
\end{enumerate}	
\end{dfn}
Now, with these actions we define the $3$-Leibniz bialgebra.
\begin{dfn}A {\textit{$3$-Leibniz bialgebra}} $(\mathcal{A},\gamma)$ is (the first or the second or the third) $3$-Leibniz algebra $\mathcal{A}$ with a linear map (cocommutator) $\gamma:\mathcal{A}\longrightarrow \mathcal{A}^{\otimes 3} $ such that
\begin{itemize}
\item
$\gamma$ is a $1$-cocycle on $\mathcal{A}$ with values in $\mathcal{A}^{\otimes 3}$
according to \eqref{r1f}-\eqref{r4f}, \eqref{r1s} and \eqref{r2t}.
{\small
\begin{align}
\gamma[y_{1},y_{2},y_{3}]=\rho_{1}(\gamma(y_{1}),y_{2},y_{3})+\rho_{2}(y_{1},\gamma(y_{2}),y_{3})+\rho_{3}(y_{1},y_{2},\gamma(y_{3})),\label{1-cocycle}
\end{align}
}
where $\mathcal{A}^{\otimes 3}$ be a $3$-Leibniz module. In above identity $\rho_{1}$,$\rho_{2}$ and $\rho_{3}$ are  three actions which transform $\mathcal{A}^{\otimes 3}$ to a $3$-Leibniz module.
\item
{\small 
$\gamma ^{t}:{\mathcal{A}^{\ast}}^{\otimes 3}\longrightarrow \mathcal{A}^{\ast}$
}
defines a $3$-Leibniz bracket on $\mathcal{A}^{\ast}$.
\end{itemize}
With the notation
{\small
\begin{align}	
[{\widetilde{x}}^{1},{\widetilde{x}}^{2},{\widetilde{x}}^{3}]_{\ast}=\gamma^{t}({\widetilde{x}}^{1}\otimes {\widetilde{x}}^{2}\otimes {\widetilde{x}}^{3}),
\quad\forall{\widetilde{x}}^{1},{\widetilde{x}}^{2},{\widetilde{x}}^{3}\in\mathcal{A}^{\ast}\label{notation}
\end{align}
}
 $\forall x\in\mathcal{A}$ we have
{\small
\begin{align}
\langle [{\widetilde{x}}^{1},{\widetilde{x}}^{2},{\widetilde{x}}^{3}]_{\ast},x\rangle=\langle\gamma^{t}({\widetilde{x}}^{1}\otimes {\widetilde{x}}^{2}\otimes {\widetilde{x}}^{3}),x\rangle=\langle{\widetilde{x}}^{1}\otimes {\widetilde{x}}^{2}\otimes {\widetilde{x}}^{3},\gamma(x) \rangle, \label{pairing}
\end{align}
}
where $\langle ,\rangle$ is the natural pairing between $\mathcal{A}$ and $\mathcal{A}^{\ast}$.
\end{dfn}
As for the type of the $3$-Leibniz algebra $\mathcal{A}$ and also its actions $\rho_{1}$ , $\rho_{2}$ and $\rho_{3}$, the $1$-cocycle condition \eqref{1-cocycle} can be rewritten in  one of the following forms:
{\small
\begin{align}
\gamma([x_{1},x_{2},x_{3}])&= \mathrm{ad}_{(\widehat{x_{1}},x_{2},x_{3})}^{(3)}
\gamma(x_{1}),\label{1-cocycle1}\\
\nonumber\\
\gamma([x_{1},x_{2},x_{3}])&= \mathrm{ad}_{(x_{1},\widehat{x_{2}},x_{3})}^{(3)}
\gamma(x_{2}),\label{1-cocycle2}\\
\nonumber\\
\gamma([x_{1},x_{2},x_{3}])&= \mathrm{ad}_{(x_{1},x_{2},\widehat{x_{3}})}^{(3)}
\gamma(x_{3}),\label{1-cocycle3}\\
\nonumber\\
\gamma([x_{1},x_{2},x_{3}])&=(\mathrm{ad}_{(\widehat{x_{1}},x_{2},x_{3})}\otimes 1\otimes 1)(\gamma(x_{1}))\nonumber\\
&\;\;+(\mathrm{ad}_{(x_{1},\widehat{x_{2}},x_{3})}\otimes 1\otimes 1)(\gamma(x_{2}))+(\mathrm{ad}_{(x_{1},x_{2},\widehat{x_{3}})}\otimes 1\otimes 1)(\gamma(x_{3})),\label{1-cocycle4}\\
\nonumber\\
\gamma([x_{1},x_{2},x_{3}])&=(1\otimes \mathrm{ad}_{(\widehat{x_{1}},x_{2},x_{3})}\otimes 1)(\gamma(x_{1}))\nonumber\\
&\;\;+(1\otimes \mathrm{ad}_{(x_{1},\widehat{x_{2}},x_{3})}\otimes  1)(\gamma(x_{2}))+(1\otimes \mathrm{ad}_{(x_{1},x_{2},\widehat{x_{3}})}\otimes 1)(\gamma(x_{3})),\label{1-cocycle5}\\
\nonumber\\
\gamma([x_{1},x_{2},x_{3}])&=(1\otimes 1\otimes \mathrm{ad}_{(\widehat{x_{1}},x_{2},x_{3})})(\gamma(x_{1}))\nonumber\\
&\;\;+(1\otimes 1\otimes \mathrm{ad}_{(x_{1},\widehat{x_{2}},x_{3})})(\gamma(x_{2}))+(1\otimes 1\otimes \mathrm{ad}_{(x_{1},x_{2},\widehat{x_{3}})})(\gamma(x_{3})).\label{1-cocycle6}
\end{align}
}
In \eqref{1-cocycle1}, \eqref{1-cocycle2},\eqref{1-cocycle3} $\mathcal{A}$ is the first and the second and the third $3$-Leibniz algebra respectively.
In \eqref{1-cocycle4}, \eqref{1-cocycle5}, \eqref{1-cocycle6} $\mathcal{A}$ can be a $3$-Leibniz algebra of various three types.
According to above $1$-cocycle  conditions the $3$-Leibniz algebra $\mathcal{A}^{\ast}$ can be the first or the second or the third $3$-Leibniz algebra. We investigate this subject as follows:
{\textit{
\begin{prop}
If $(\mathcal{A},\gamma)$ be a  $3$-Leibniz bialgebra, and $\mu$ be a  $3$-Leibniz bracket on $\mathcal{A}$, then
$(\mathcal{A}^{\ast},\mu^{t})$ will be a $3$-Leibniz bialgebra, where $\gamma^{t}$ is a $3$-Leibniz bracket on $\mathcal{A}^{\ast}$.
\end{prop}
}}
\begin{proof}
The proof will be divided into three steps.
\begin{itemize}
\item
Assume \eqref{1-cocycle1} holds for the value of $\gamma([x_{1},x_{2},x_{3}])$, then from \eqref{pairing} we have
\begin{align}
\langle [\xi_{1},\xi_{2},\xi_{3}]_{\ast},[x_{1},x_{2},x_{3}]\rangle&=\langle \xi_{1}\otimes \xi_{2}\otimes\xi_{3},\gamma[x_{1},x_{2},x_{3}]\rangle\nonumber\\
&=\langle\xi_{1}\otimes\xi_{2}\otimes\xi_{3},(\mathrm{ad}_{(\widehat{x_{1}},x_{2},x_{3})}\otimes 1\otimes 1)(\gamma (x_{1}))\rangle\nonumber\\
&\;\;+\langle\xi_{1}\otimes\xi_{2}\otimes\xi_{3},(1\otimes \mathrm{ad}_{(\widehat{x_{1}},x_{2},x_{3})}\otimes 1)(\gamma(x_{1}))\rangle\nonumber\\
&\;\;+\langle\xi_{1}\otimes\xi_{2}\otimes\xi_{3},(1\otimes 1\otimes \mathrm{ad}_{(\widehat{x_{1}},x_{2},x_{3})})(\gamma(x_{1}))\rangle.\label{1'}
\end{align}
We now define the coadjoint representation of a $3$-Leibniz algebra on the dual vector space. Let $\mathcal{A}$ be a $3$-Leibniz algebra and let $\mathcal{A}^{\ast}$ be its dual vector space, then for $x_{1},x_{2},x_{3}\in\mathcal{A}$ we have
{\small
\begin{align}
&\mathrm{ad}_{(\widehat{x_{1}},x_{2},x_{3})}^{\ast}:\mathcal{A}^{\ast}\longrightarrow \mathcal{A}^{\ast},\nonumber\\
&\langle \mathrm{ad}_{(\widehat{x_{1}},x_{2},x_{3})}^{\ast}\xi,y\rangle=\langle\xi,\mathrm{ad}_{(\widehat{x_{1}},x_{2},x_{3})}y\rangle=\langle\xi,[y,x_{2},x_{3}]\rangle,
\end{align}
and similarly
\begin{align}
&	\mathrm{ad}_{(x_{1},\widehat{x_{2}},x_{3})}^{\ast}:\mathcal{A}^{\ast}\longrightarrow \mathcal{A}^{\ast},\nonumber\\
&\langle \mathrm{ad}_{(x_{1},\widehat{x_{2}},x_{3})}^{\ast}\xi,y\rangle
=\langle\xi,[x_{1},y,x_{3}]\rangle,
\end{align}
\begin{align}
&	\mathrm{ad}_{(x_{1},x_{2},\widehat{x_{3}})}^{\ast}:\mathcal{A}^{\ast}\longrightarrow \mathcal{A}^{\ast},\nonumber\\
&\langle \mathrm{ad}_{(x_{1},x_{2},\widehat{x_{3}})}^{\ast}\xi,y\rangle
=\langle\xi,[x_{1},x_{2},y]\rangle.
\end{align}
}
Using these relations, \eqref{1'} can be rewritten as
\begin{align}
\begin{split}
&\langle [\xi_{1},\xi_{2},\xi_{3}]_{\ast},[x_{1},x_{2},x_{3}]\rangle=\langle[\mathrm{ad}_{(\widehat{x_{1}},x_{2},x_{3})}^{\ast}\xi_{1},\xi_{2},\xi_{3}]_{\ast},x_{1}\rangle\\
&+\langle[\xi_{1},\mathrm{ad}_{(\widehat{x_{1}},x_{2},x_{3})}^{\ast}\xi_{2},\xi_{3}]_{\ast},x_{1}\rangle+\langle[\xi_{1},\xi_{2},\mathrm{ad}_{(\widehat{x_{1}},x_{2},x_{3})}^{\ast}\xi_{3}]_{\ast},x_{1}\rangle.\label{1''}
\end{split}
\end{align}
In the similar way as above; for any $\xi_{1},\xi_{2},\xi_{3}\in\mathcal{A}^{\ast}$ we have
{\small
\begin{align}
&	\mathrm{ad}_{(\widehat{\xi_{1}},\xi_{2},\xi_{3})}^{\ast}:\mathcal{A}\longrightarrow \mathcal{A}\cong{\mathcal{A}^{\ast}}^{\ast},\nonumber\\
&	\langle \mathrm{ad}_{(\widehat{\xi_{1}},\xi_{2},\xi_{3})}^{\ast}x,\eta\rangle=\langle x,\mathrm{ad}_{(\widehat{\xi_{1}},\xi_{2},\xi_{3})}\eta\rangle=\langle x,[\eta,\xi_{2},\xi_{3}]_{\ast}\rangle,
\end{align}
and similarly
\begin{align}
&	\mathrm{ad}_{(\xi_{1},\widehat{\xi_{2}},\xi_{3})}^{\ast}:\mathcal{A}\longrightarrow \mathcal{A}\cong{\mathcal{A}^{\ast}}^{\ast},\nonumber\\
&\langle \mathrm{ad}_{(\xi_{1},\widehat{\xi_{2}},\xi_{3})}^{\ast} x,\eta\rangle=\langle x,[\xi_{1},\eta,\xi_{3}]_{\ast}\rangle,
\end{align}
\begin{align}
&	\mathrm{ad}_{(\xi_{1},\xi_{2},\widehat{\xi_{3}})}^{\ast}:\mathcal{A}\longrightarrow\mathcal{A}\cong{\mathcal{A}^{\ast}}^{\ast},\nonumber\\
&	\langle \mathrm{ad}_{(\xi_{1},\xi_{2},\widehat{\xi_{3}})}^{\ast} x,\eta\rangle=\langle x,[\xi_{1},\xi_{2},\eta]_{\ast}\rangle.
\end{align}
}
By using these relations, \eqref{1''} can be rewritten as
\begin{align}
\begin{split}
\langle [\xi_{1},\xi_{2},\xi_{3}]_{\ast},[x_{1},x_{2},x_{3}]\rangle&=\langle \mathrm{ad}_{(\widehat{x_{1}},x_{2},x_{3})}^{\ast}\xi_{1},\mathrm{ad}_{(\widehat{\xi_{1}},\xi_{2},\xi_{3})}^{\ast}x_{1}\rangle\\
&\;\;+\langle \mathrm{ad}_{(\widehat{x_{1}},x_{2},x_{3})}^{\ast}\xi_{2},\mathrm{ad}_{(\xi_{1},\widehat{\xi_{2}},\xi_{3})}^{\ast}x_{1}\rangle\\
&\;\;+\langle \mathrm{ad}_{(\widehat{x_{1}},x_{2},x_{3})}^{\ast}\xi_{3},\mathrm{ad}_{(\xi_{1},\xi_{2},\widehat{\xi_{3}})}^{\ast}x_{1}\rangle,\label{1'''}
\end{split}
\end{align}
or
\begin{align}
\begin{split}
\langle [\xi_{1},\xi_{2},\xi_{3}]_{\ast},\mu(x_{1}\otimes x_{2}\otimes x_{3})\rangle&=\langle
\xi_{1},[\mathrm{ad}_{(\widehat{\xi_{1}},\xi_{2},\xi_{3})}^{\ast}x_{1},x_{2},x_{3}]\rangle\\
&\;\;+\langle\xi_{2},[\mathrm{ad}_{(\xi_{1},\widehat{\xi_{2}},\xi_{3})}^{\ast}x_{1},x_{2},x_{3}]\rangle\\
&\;\;+\langle\xi_{3},[\mathrm{ad}_{(\xi_{1},\xi_{2},\widehat{\xi_{3}})}^{\ast}x_{1},x_{2},x_{3}]\rangle,
\end{split}
\end{align}
where $\mu$ is the $3$-Leibniz bracket on $\mathcal{A}$ and $\mu^{t}:\mathcal{A}^{\ast}\longrightarrow \mathcal{A}^{\ast}\otimes \mathcal{A}^{\ast}\otimes \mathcal{A}^{\ast}$ is a cocommutator on $\mathcal{A}^{\ast}$. Therefore, we have
{\small
\begin{align}
\begin{split}
\langle \mu^{t}[\xi_{1},\xi_{2},\xi_{3}]_{\ast},x_{1}\otimes x_{2}\otimes x_{3}\rangle&=\langle (\mathrm{ad}_{(\widehat{\xi_{1}},\xi_{2},\xi_{3})}\otimes 1\otimes 1)(\mu^{t}(\xi_{1})),x_{1}\otimes x_{2}\otimes x_{3}\rangle\\
&\;\;+\langle (\mathrm{ad}_{(\xi_{1},\widehat{\xi_{2}},\xi_{3})}\otimes 1\otimes 1)(\mu^{t}(\xi_{2})),x_{1}\otimes x_{2}\otimes x_{3}\rangle\\
&\;\;+\langle (\mathrm{ad}_{(\xi_{1},\xi_{2},\widehat{\xi_{3}})}\otimes 1\otimes 1)(\mu^{t}(\xi_{3})),x_{1}\otimes x_{2}\otimes x_{3}\rangle,
\end{split}
\end{align}
}
or
{\small
\begin{align}
\begin{split}
\mu^{t}[\xi_{1},\xi_{2},\xi_{3}]_{\ast}&=(\mathrm{ad}_{(\widehat{\xi_{1}},\xi_{2},\xi_{3})}\otimes 1\otimes 1)(\mu^{t}(\xi_{1}))+(\mathrm{ad}_{(\xi_{1},\widehat{\xi_{2}},\xi_{3})}\otimes 1\otimes 1)(\mu^{t}(\xi_{2}))\\
&\;\;+(\mathrm{ad}_{(\xi_{1},\xi_{2},\widehat{\xi_{3}})}\otimes 1\otimes 1)(\mu^{t}(\xi_{3})).\label{dual11-cocycle}
\end{split}
\end{align}
}
But, this relation is the $1$-cocycle condition \eqref{1-cocycle4} for $(\mathcal{A}^{\ast},\mu^{t})$ which it  shows ${\mathcal{A}^{\ast}}^{\otimes 3}$ is a
$3$-Leibniz module on $\mathcal{A}^{\ast}$; i.e. {\textit{$\mathcal{A}^{\ast}$ can be a $3$-Leibniz algebra of various three types.}}
\item
In the same way, if \eqref{1-cocycle2} holds for the value of $\gamma([x_{1},x_{2},x_{3}])$, then by assuming that {\textit{$\mathcal{A}$ is the second $3$-Leibniz algebra}},
instead of \eqref{dual11-cocycle} we will have
{\small
\begin{align}
\begin{split}
\mu^{t}[\xi_{1},\xi_{2},\xi_{3}]_{\ast}&=(1\otimes \mathrm{ad}_{(\widehat{\xi_{1}},\xi_{2},\xi_{3})}\otimes 1)(\mu^{t}(\xi_{1}))+(1\otimes \mathrm{ad}_{(\xi_{1},\widehat{\xi_{2}},\xi_{3})}\otimes 1)(\mu^{t}(\xi_{2}))\\
&\;\;+(1\otimes \mathrm{ad}_{(\xi_{1},\xi_{2},\widehat{\xi_{3}})}\otimes  1)(\mu^{t}(\xi_{3})),\label{dual21-cocycle}
\end{split}
\end{align}
}
where this relation is the $1$-cocycle condition \eqref{1-cocycle5} for $(\mathcal{A}^{\ast},\mu^{t})$, i. e. $\mathcal{A}^{\ast}\otimes \mathcal{A}^{\ast}\otimes \mathcal{A}^{\ast}$ is a  $3$-Leibniz module on $\mathcal{A}^{\ast}$ and {\textit{$\mathcal{A}^{\ast}$ can be a $3$-Leibniz algebra of various three types.}}\\
On the other hand, for the third $3$-Leibniz algebra $(\mathcal{A},\mu)$ when one uses \eqref{1-cocycle3} for the value $\gamma([x_{1},x_{2},x_{3}])$  we have
{\small
\begin{align}
\begin{split}
\mu^{t}[\xi_{1},\xi_{2},\xi_{3}]_{\ast}&=(1\otimes 1 \otimes \mathrm{ad}_{(\widehat{\xi_{1}},\xi_{2},\xi_{3})})(\mu^{t}(\xi_{1}))+(1\otimes 1 \otimes \mathrm{ad}_{(\xi_{1},\widehat{\xi_{2}},\xi_{3})})(\mu^{t}(\xi_{2}))\\
&\;\;+(1\otimes 1\otimes \mathrm{ad}_{(\xi_{1},\xi_{2},\widehat{\xi_{3}})} )(\mu^{t}(\xi_{3})),\label{dual31-cocycle}
\end{split}
\end{align}
}
instead of \eqref{dual11-cocycle}, and this shows that $\mu^{t}$ is a $1$-cocycle condition \eqref{1-cocycle6} for $(\mathcal{A}^{\ast},\mu^{t})$ where $\mathcal{A}^{\ast}\otimes \mathcal{A}^{\ast}\otimes \mathcal{A}^{\ast}$ is a $3$-Leibniz module on $\mathcal{A}^{\ast}$ and {\textit{$\mathcal{A}^{\ast}$ can be $3$-Leibniz algebra of various three types.}}
\item
In the same way, if one uses  \eqref{1-cocycle4} for the value of $\gamma([x_{1},x_{2},x_{3}])$, then by assuming that {\textit{$\mathcal{A}$ can be a $3$-Leibniz algebra of various three types}},
we will have
\begin{align}
	\mu^{t}[\xi_{1},\xi_{2},\xi_{3}]_{\ast}=\mathrm{ad}_{(\widehat{\xi_{1}},\xi_{2},\xi_{3})}^{(3)}\mu^{t}(x_{1}),\label{dual41-cocycle}
\end{align}
and this shows that $\mu^{t}$ is a $1$-cocycle condition \eqref{1-cocycle1} for $(\mathcal{A}^{\ast},\mu^{t})$ where it shows $\mathcal{A}^{\ast}\otimes \mathcal{A}^{\ast}\otimes \mathcal{A}^{\ast}$ is a $3$-Leibniz module on $\mathcal{A}^{\ast}$ and {\textit{$\mathcal{A}^{\ast}$ is the first $3$-Leibniz algebra.}}
\item
Using \eqref{1-cocycle5} for the value of $\gamma([x_{1},x_{2},x_{3}])$, then by assuming that {\textit{$\mathcal{A}$ can be a $3$-Leibniz algebra of various three types}},
we have
\begin{align}
\mu^{t}[\xi_{1},\xi_{2},\xi_{3}]_{\ast}=\mathrm{ad}_{(\xi_{1},\widehat{\xi_{2}},\xi_{3})}^{(3)}\mu^{t}(x_{2}),\label{dual51-cocycle}
\end{align}
and this shows that $\mu^{t}$ is a $1$-cocycle condition \eqref{1-cocycle2} for $(\mathcal{A}^{\ast},\mu^{t})$ where $\mathcal{A}^{\ast}\otimes \mathcal{A}^{\ast}\otimes \mathcal{A}^{\ast}$ is a $3$-Leibniz module on $\mathcal{A}^{\ast}$ and {\textit{$\mathcal{A}^{\ast}$ is the second $3$-Leibniz algebra.}}
\item
Finally, if one uses \eqref{1-cocycle6} for the value of $\gamma([x_{1},x_{2},x_{3}])$, then by assuming that {\textit{$\mathcal{A}$ is a $3$-Leibniz algebra of various three types}},
we will have
\begin{align}
\mu^{t}[\xi_{1},\xi_{2},\xi_{3}]_{\ast}=\mathrm{ad}_{(\xi_{1},\xi_{2},\widehat{\xi_{3}})}^{(3)}\mu^{t}(x_{3}),\label{dual61-cocycle}
\end{align}
and this shows that $\mu^{t}$ is a $1$-cocycle condition \eqref{1-cocycle3} for $(\mathcal{A}^{\ast},\mu^{t})$ where $\mathcal{A}^{\ast}\otimes \mathcal{A}^{\ast}\otimes \mathcal{A}^{\ast}$ is a $3$-Leibniz module on $\mathcal{A}^{\ast}$ and {\textit{$\mathcal{A}^{\ast}$ can be the third $3$-Leibniz algebra.}}
\end{itemize}
Therefore, a $3$-Leibniz bialgebra $(\mathcal{A},\gamma)$ can also be denoted by $(\mathcal{A},\mathcal{A}^{\ast})$.
\end{proof}

There are no Manin triple for $3$-Leibniz bialgebras.

\section{ $3$-Leibniz bialgebra in terms of structure constants; some examples}
	\hspace{13.5pt}
In this section, we obtain some examples of $3$-Leibniz bialgebras. For this purpose, we first rewrite the $1$-cocycle conditions  \eqref{1-cocycle1}-\eqref{1-cocycle3} in terms of structure constants of the $3$-Leibniz algebras $\mathcal{A}$ and $\mathcal{A}^{\ast}$. If we choose $(\{x_i\},f_{ijk}\,^{m})$ and $(\{\widetilde{x}^{i}\},\widetilde{f}^{ijk}\,_{m})$ as the basis and structure constants of $3$-Leibniz algebras $\mathcal{A}$ and $\mathcal{A}^{\ast}$ respectively; then we will have the commutation relations as follows
\begin{align}
[x_i, x_j, x_k]=f_{ijk}\,^{m}x_m,\qquad
[\widetilde{x}^{i}, \widetilde{x}^{j}, \widetilde{x}^{k}]_{\ast}=\widetilde{f}^{ijk}\,_{m}\widetilde{x}^{m}.\label{st-con}
\end{align}
Using \eqref{pairing} we have
{\small
\begin{align}
\begin{split}
\langle \widetilde{x}^{j}\otimes \widetilde{x}^{k}\otimes \widetilde{x}^{m},\gamma(x^{i})\rangle&=\langle \gamma^{t}(\widetilde{x}^{j}\otimes \widetilde{x}^{k}\otimes \widetilde{x}^{m}),x_{i}\rangle=\langle [\widetilde{x}^{j},\widetilde{x}^{k},\widetilde{x}^{m}]_{\ast},x_{i}\rangle\\
&=\langle \widetilde{f}^{jkm}\,_{n}\widetilde{x}^{n},x_{i}\rangle=\widetilde{f}^{jkm}\,_{i},
\end{split}
\end{align}
}
namely
{\small
\begin{align}
\gamma(x_{i})=\widetilde{f}^{jkm}\,_{i}  x_{j}\otimes x_{k}\otimes x_{m}\label{cocommutator}
\end{align}
}
Now using structure constants of $\mathcal{A}$ and \eqref{cocommutator} in the 1-cocycle conditions \eqref{1-cocycle1}-\eqref{1-cocycle3} we obtain the following relations respectively:
{\small
\begin{align}
& f_{isn}\,^{p}\widetilde{f}^{jkm}\,_{p}
=\widetilde{f}^{j'km}\,_{i} f_{j'sn}\,^{j}+
\widetilde{f}^{jk'm}\,_{i} f_{k'sn}\,^{k}+\widetilde{f}^{jkm'}\,_{i} f_{m'sn}\,^{m},\label{1-cocycle1'}\\
& f_{isn}\,^{p}\widetilde{f}^{jkm}\,_{p}
=\widetilde{f}^{j'km}\,_{s} f_{ij'n}\,^{j}+
\widetilde{f}^{jk'm}\,_{s} f_{ik'n}\,^{k}+\widetilde{f}^{jkm'}\,_{s} f_{im'n}\,^{m},\label{1-cocycle2'}\\
& f_{isn}\,^{p}\widetilde{f}^{jkm}\,_{p}
=\widetilde{f}^{j'km}\,_{n} f_{isj'}\,^{j}+
\widetilde{f}^{jk'm}\,_{n} f_{isk'}\,^{k}+\widetilde{f}^{jkm'}\,_{n} f_{ism'}\,^{m}.\label{1-cocycle3'}
\end{align}
}
Note that similar to the Lie bialgebras case \cite{Drin} one can use these three relations as a definition of $3$-Leibniz bialgebra.
\begin{dfn}
If the structure constants of the $3$-Leibniz algebras $\mathcal{A}$ and $\mathcal{A}^{\ast}$ satisfy in relations
\eqref{1-cocycle1'}-\eqref{1-cocycle3'} then $3$-Leibniz algebras $\mathcal{A}$ and $\mathcal{A}^{\ast}$ will construct a $3$-Leibniz bialgebra.
\end{dfn}
To use these relations in the calculations, we must first translate the tensor form of these relations to the matrix forms by using the following adjoint representations
{\small
\begin{align}
& f_{isn}\,^{p}=(\chi_{is})_{n}\,^{p}
=(\mathcal{Y}_{i}\,^{p})_{sn}=f'_{sin}\,^{p}=(\chi'_{si})_{n}\,^{p}
=(\mathcal{Y}'_{s}\,^{p})_{in}\\
&\widetilde{f}^{jkm}\,_{p}=(\widetilde{\chi}^{jk})^{m}\,_{p}=(\widetilde{\mathcal{Y}}^{j}\,_{p})^{km}=\widetilde{f'}^{kjm}\,_{p}
=(\widetilde{\chi'}^{kj})^{m}\,_{p}=(\widetilde{\mathcal{Y}'}^{k}\,_{p})^{jm}.
\end{align}
}
Then,  \eqref{1-cocycle1'}-\eqref{1-cocycle3'} have the following matrix forms respectively:
{\small
\begin{align}
&(\chi_{is})(\widetilde{\chi}^{jk})^{t}=(\mathcal{Y}'_{s}\,^{j})^{t}(\widetilde{\mathcal{Y}'}^{k}\,_{i})+
(\mathcal{Y}'_{s}\,^{k})^{t}(\widetilde{\mathcal{Y}}^{j}\,_{i})+(\widetilde{\chi}^{jk})^{m'}\,_{i}(\chi_{m's}),\label{1-cocycle1''}\\
&(\chi_{is})(\widetilde{\chi}^{jk})^{t}=(\mathcal{Y}_{i}\,^{j})^{t}(\widetilde{\mathcal{Y}'}^{k}\,_{s})+
(\mathcal{Y}_{i}\,^{k})^{t}(\widetilde{\mathcal{Y}}^{j}\,_{s})+(\widetilde{\chi}^{jk})^{m'}\,_{s}(\chi_{im'}),\label{1-cocycle2''}\\
&(\chi_{is})(\widetilde{\chi}^{jk})^{t}=(\chi_{is})_{j'}\,^{j}(\widetilde{\chi}^{j'k})^{t}+
(\chi_{is})_{k'}\,^{k}(\widetilde{\chi}^{j'k})^{t}+(\widetilde{\chi}^{jk})^{t}(\chi_{is}),\label{1-cocycle3''}
\end{align}
}
where in the above relations $t$ stands for the transpose of a matrix. On the other hand, \eqref{f3li}-\eqref{t3li}  for the $3$-Leibniz algebra $\mathcal{A}^{\ast}$ in terms of the structure constants as follows:
{\small
\begin{align}
&\widetilde{f}^{ijk}\,_{p}\widetilde{f}^{psm}\,_{n}=\widetilde{f}^{ism}\,_{p}\widetilde{f}^{pjk}\,_{n}+
\widetilde{f}^{jsm}\,_{p}\widetilde{f}^{ipk}\,_{n}+\widetilde{f}^{ksm}\,_{p}\widetilde{f}^{ijp}\,_{n},\label{f3li1'}\\
&\widetilde{f}^{jks}\,_{p}\widetilde{f}^{ipm}\,_{n}=\widetilde{f}^{ijm}\,_{p}\widetilde{f}^{pks}\,_{n}+
\widetilde{f}^{ikm}\,_{p}\widetilde{f}^{jps}\,_{n}+\widetilde{f}^{ism}\,_{p}\widetilde{f}^{jkp}\,_{n},\label{s3li2'}\\
&\widetilde{f}^{ksm}\,_{p}\widetilde{f}^{ijp}\,_{n}=\widetilde{f}^{ijk}\,_{p}\widetilde{f}^{psm}\,_{n}+	\widetilde{f}^{ijs}\,_{p}\widetilde{f}^{kpm}\,_{n}+\widetilde{f}^{ijm}\,_{p}\widetilde{f}^{ksp}\,_{n},\label{t3li3'}
\end{align}
}
where we have the following matrix form of these relations respectively:
{\small
\begin{align}
&(\widetilde{\chi}^{ij})(\widetilde{\mathcal{Y}'}^{s}\,_{n})=(\widetilde{\mathcal{Y}'}^{j}\,_{n})^{t}(\widetilde{\chi}^{is})^{t}+
(\widetilde{\mathcal{Y}}^{i}\,_{n})^{t}(\widetilde{\chi}^{js})^{t}+(\widetilde{\chi}^{ij})^{p}\,_{n}(\widetilde{\mathcal{Y}'}^{s}\,_{p}),
\label{f3li1''}\\
&(\widetilde{\chi}^{jk})(\widetilde{\mathcal{Y}}^{i}\,_{n})=(\widetilde{\mathcal{Y}'}^{k}\,_{n})^{t}(\widetilde{\chi}^{ij})^{t}+
(\widetilde{\mathcal{Y}}^{j}\,_{n})^{t}(\widetilde{\chi}^{ik})^{t}+(\widetilde{\chi}^{jk})^{p}\,_{m}(\widetilde{\mathcal{Y}}^{i}\,_{p}),
\label{s3li2''}\\
&(\widetilde{\chi}^{ks})(\widetilde{\chi}^{ij})=(\widetilde{\chi}^{ij})^{k}\,_{p}(\widetilde{\chi}^{ps})+
(\widetilde{\chi}^{ij})^{s}\,_{p}(\widetilde{\chi}^{kp})+(\widetilde{\chi}^{ij})(\widetilde{\chi}^{ks}).\label{t3li3''}
\end{align}
}
Now, one can use  \eqref{1-cocycle1''}-\eqref{1-cocycle3''} and \eqref{f3li1''}-\eqref{t3li3''} for calculation of the dual $3$-Leibniz algebra $\mathcal{A}^{\ast}$. According to the type of $3$-Leibniz algebras $\mathcal{A}$ and $\mathcal{A}^{\ast}$, we must solve the following equations:
\begin{itemize}
\item
If $\mathcal{A}$ and $\mathcal{A}^{\ast}$ be both the first $3$-Leibniz algebra	
{\small
\begin{align}
\begin{split}
\begin{cases}
(\chi_{is})(\widetilde{\chi}^{jk})^{t}-(\mathcal{Y}'_{s}\,^{j})^{t}(\widetilde{\mathcal{Y}'}^{k}\,_{i})-(\mathcal{Y}'_{s}\,^{k})^{t}(\widetilde{\mathcal{Y}}^{j}\,
_{i})-(\widetilde{\chi}^{jk})^{m'}\,_{i}(\chi_{m's})=0,\\
(\widetilde{\chi}^{ij})(\widetilde{\mathcal{Y}'}^{s}\,_{n})-(\widetilde{\mathcal{Y}'}^{j}\,_{n})^{t}(\widetilde{\chi}^{is})^{t}-
(\widetilde{\mathcal{Y}}^{i}\,_{n})^{t}(\widetilde{\chi}^{js})^{t}-(\widetilde{\chi}^{ij})^{p}\,_{n}(\widetilde{\mathcal{Y}'}^{s}\,_{p})=0.
\label{f-f}
\end{cases}
\end{split}
\end{align}
\item
If $\mathcal{A}$ and $\mathcal{A}^{\ast}$ be the first and the second $3$-Leibniz algebra respectively
\begin{align}
\begin{split}
\begin{cases}
(\chi_{is})(\widetilde{\chi}^{jk})^{t}-(\mathcal{Y}'_{s}\,^{j})^{t}(\widetilde{\mathcal{Y}'}^{k}\,_{i})-(\mathcal{Y}'_{s}\,^{k})^{t}(\widetilde{\mathcal{Y}}^{j}\,_{i})-(\widetilde{\chi}^{jk})^{m'}\,_{i}(\chi_{m's})=0,\\
(\widetilde{\chi}^{jk})(\widetilde{\mathcal{Y}}^{i}\,_{n})-(\widetilde{\mathcal{Y}'}^{k}\,_{n})^{t}(\widetilde{\chi}^{ij})^{t}-
(\widetilde{\mathcal{Y}}^{j}\,_{n})^{t}(\widetilde{\chi}^{ik})^{t}-(\widetilde{\chi}^{jk})^{p}\,_{m}(\widetilde{\mathcal{Y}}^{i}\,_{p})=0.\label{f-s}
\end{cases}
\end{split}
\end{align}
\item
If $\mathcal{A}$ and $\mathcal{A}^{\ast}$ be the first and the third $3$-Leibniz algebra respectively
\begin{align}
\begin{split}
\begin{cases}
(\chi_{is})(\widetilde{\chi}^{jk})^{t}-(\mathcal{Y}'_{s}\,^{j})^{t}(\widetilde{\mathcal{Y}'}^{k}\,_{i})-(\mathcal{Y}'_{s}\,^{k})^{t}(\widetilde{\mathcal{Y}}^{j}\,
_{i})-(\widetilde{\chi}^{jk})^{m'}\,_{i}(\chi_{m's})=0,\\
(\widetilde{\chi}^{ks})(\widetilde{\chi}^{ij})-(\widetilde{\chi}^{ij})^{k}\,_{p}(\widetilde{\chi}^{ps})-
(\widetilde{\chi}^{ij})^{s}\,_{p}(\widetilde{\chi}^{kp})-(\widetilde{\chi}^{ij})(\widetilde{\chi}^{ks})=0.\label{f-t}
\end{cases}
\end{split}
\end{align}
\item
If	$\mathcal{A}$ and $\mathcal{A}^{\ast}$ be the second and the first $3$-Leibniz algebra respectively
\begin{align}
\begin{split}
\begin{cases}
(\chi_{is})(\widetilde{\chi}^{jk})^{t}-(\mathcal{Y}_{i}\,^{j})^{t}(\widetilde{\mathcal{Y}'}^{k}\,_{s})-
(\mathcal{Y}_{i}\,^{k})^{t}(\widetilde{\mathcal{Y}}^{j}\,_{s})-(\widetilde{\chi}^{jk})^{m'}\,_{s}(\chi_{im'})=0,\\
(\widetilde{\chi}^{ij})(\widetilde{\mathcal{Y}'}^{s}\,_{n})-(\widetilde{\mathcal{Y}'}^{j}\,_{n})^{t}(\widetilde{\chi}^{is})^{t}-
(\widetilde{\mathcal{Y}}^{i}\,_{n})^{t}(\widetilde{\chi}^{js})^{t}-(\widetilde{\chi}^{ij})^{p}\,_{n}(\widetilde{\mathcal{Y}'}^{s}\,_{p})=0.
\end{cases}
\end{split}
\end{align}
\item
If	$\mathcal{A}$ and $\mathcal{A}^{\ast}$ be both the second $3$-Leibniz algebra
\begin{align}
\begin{split}
\begin{cases}
(\chi_{is})(\widetilde{\chi}^{jk})^{t}-(\mathcal{Y}_{i}\,^{j})^{t}(\widetilde{\mathcal{Y}'}^{k}\,_{s})-
(\mathcal{Y}_{i}\,^{k})^{t}(\widetilde{\mathcal{Y}}^{j}\,_{s})-(\widetilde{\chi}^{jk})^{m'}\,_{s}(\chi_{im'})=0,\\
(\widetilde{\chi}^{jk})(\widetilde{\mathcal{Y}}^{i}\,_{n})-(\widetilde{\mathcal{Y}'}^{k}\,_{n})^{t}(\widetilde{\chi}^{ij})^{t}-
(\widetilde{\mathcal{Y}}^{j}\,_{n})^{t}(\widetilde{\chi}^{ik})^{t}-(\widetilde{\chi}^{jk})^{p}\,_{m}(\widetilde{\mathcal{Y}}^{i}\,_{p})=0.
\end{cases}
\end{split}
\end{align}
\item
If	$\mathcal{A}$ and $\mathcal{A}^{\ast}$ be the second and the third $3$-Leibniz algebra respectively
\begin{align}
\begin{split}
\begin{cases}
(\chi_{is})(\widetilde{\chi}^{jk})^{t}-(\mathcal{Y}_{i}\,^{j})^{t}(\widetilde{\mathcal{Y}'}^{k}\,_{s})-
(\mathcal{Y}_{i}\,^{k})^{t}(\widetilde{\mathcal{Y}}^{j}\,_{s})-(\widetilde{\chi}^{jk})^{m'}\,_{s}(\chi_{im'})=0,\\
(\widetilde{\chi}^{ks})(\widetilde{\chi}^{ij})-(\widetilde{\chi}^{ij})^{k}\,_{p}(\widetilde{\chi}^{ps})-
(\widetilde{\chi}^{ij})^{s}\,_{p}(\widetilde{\chi}^{kp})-(\widetilde{\chi}^{ij})(\widetilde{\chi}^{ks})=0.
\end{cases}
\end{split}
\end{align}
\item
If	$\mathcal{A}$ and $\mathcal{A}^{\ast}$ be the third and the first $3$-Leibniz algebra respectively
\begin{align}
\begin{split}
\begin{cases}
(\chi_{is})(\widetilde{\chi}^{jk})^{t}-(\chi_{is})_{j'}\,^{j}(\widetilde{\chi}^{j'k})^{t}-
(\chi_{is})_{k'}\,^{k}(\widetilde{\chi}^{j'k})^{t}-(\widetilde{\chi}^{jk})^{t}(\chi_{is})=0,\\
(\widetilde{\chi}^{ij})(\widetilde{\mathcal{Y}'}^{s}\,_{n})-(\widetilde{\mathcal{Y}'}^{j}\,_{n})^{t}(\widetilde{\chi}^{is})^{t}-
(\widetilde{\mathcal{Y}}^{i}\,_{n})^{t}(\widetilde{\chi}^{js})^{t}-(\widetilde{\chi}^{ij})^{p}\,_{n}(\widetilde{\mathcal{Y}'}^{s}\,_{p})=0.
\end{cases}
\end{split}
\end{align}
\item
If	$\mathcal{A}$ and $\mathcal{A}^{\ast}$ be the third and the second $3$-Leibniz algebra respectively
\begin{align}
\begin{split}
\begin{cases}
(\chi_{is})(\widetilde{\chi}^{jk})^{t}-(\chi_{is})_{j'}\,^{j}(\widetilde{\chi}^{j'k})^{t}-
(\chi_{is})_{k'}\,^{k}(\widetilde{\chi}^{j'k})^{t}-(\widetilde{\chi}^{jk})^{t}(\chi_{is})=0,\\
(\widetilde{\chi}^{jk})(\widetilde{\mathcal{Y}}^{i}\,_{n})-(\widetilde{\mathcal{Y}'}^{k}\,_{n})^{t}(\widetilde{\chi}^{ij})^{t}-
(\widetilde{\mathcal{Y}}^{j}\,_{n})^{t}(\widetilde{\chi}^{ik})^{t}-(\widetilde{\chi}^{jk})^{p}\,_{m}(\widetilde{\mathcal{Y}}^{i}\,_{p})=0.
\end{cases}
\end{split}
\end{align}
\item
If	$\mathcal{A}$ and $\mathcal{A}^{\ast}$ be both the third $3$-Leibniz algebra
\begin{align}
\begin{split}
\begin{cases}
(\chi_{is})(\widetilde{\chi}^{jk})^{t}-(\chi_{is})_{j'}\,^{j}(\widetilde{\chi}^{j'k})^{t}-
(\chi_{is})_{k'}\,^{k}(\widetilde{\chi}^{j'k})^{t}-(\widetilde{\chi}^{jk})^{t}(\chi_{is})=0,\\
(\widetilde{\chi}^{ks})(\widetilde{\chi}^{ij})-(\widetilde{\chi}^{ij})^{k}\,_{p}(\widetilde{\chi}^{ps})-
(\widetilde{\chi}^{ij})^{s}\,_{p}(\widetilde{\chi}^{kp})-(\widetilde{\chi}^{ij})(\widetilde{\chi}^{ks})=0.
\end{cases}
\end{split}
\end{align}
}
\end{itemize}
Now, by use of the above relations, we obtain some examples as follows.
\begin{ex}
We consider the following three dimensional first $3$-Leibniz algebras \cite{Zhang}
\begin{enumerate}
\item
$[e_{2},e_{3},e_{3}]=e_{1}\quad,\quad [e_{3},e_{3},e_{3}]=e_{2},$\\
			
By solving the system  of equations \eqref{f-f}- \eqref{f-t} we obtain the following $\mathcal{A}^{\ast}$ algebras:
\begin{itemize}
\item
$\mathcal{A}^{\ast}$ as a	second $3$-Leibniz algebra	
{\small
\begin{align}
[\widetilde{e}^1,\widetilde{e}^1,\widetilde{e}^1]_{\ast}=b\widetilde{e}^2+a\widetilde{e}^3,\quad
[\widetilde{e}^1,\widetilde{e}^2,\widetilde{e}^1]_{\ast}=b\widetilde{e}^3,\nonumber
\end{align}
}
where $a$ and $b$ are any non zero real numbers.
\item
$\mathcal{A}^{\ast}$ as a third $3$-Leibniz algebra
{\small
\begin{align*}
[\widetilde{e}^{1},\widetilde{e}^{1},\widetilde{e}^{1}]_{\ast}=b\widetilde{e}^{2}+a\widetilde{e}^{3},\quad
[\widetilde{e}^{1},\widetilde{e}^{1},\widetilde{e}^{2}]_{\ast}=b\widetilde{e}^{3},
\end{align*}
}
where $a$ and $b$ are any non zero real numbers.
\end{itemize}
\item
$[e_{3},e_{2},e_{3}]=e_{2}\quad,\quad [e_{3},e_{3},e_{2}]=-e_{2},\quad,\quad [e_{3},e_{3},e_{3}]=e_{1}+e_{2},$\\
By solving the  system  of equations \eqref{f-f}- \eqref{f-t} we obtain the following $\mathcal{A}^{\ast}$ algebras:
\begin{itemize}
\item
$\mathcal{A}^{\ast}$ as a first $3$-Leibniz algebra
{\small
\begin{align*}
[\widetilde{e}^{1},\widetilde{e}^{1},\widetilde{e}^{2}]_{\ast}=a\widetilde{e}^{3},\quad
[\widetilde{e}^{1},\widetilde{e}^{2},\widetilde{e}^{2}]_{\ast}=b\widetilde{e}^{3},\quad
[\widetilde{e}^{2},\widetilde{e}^{1},\widetilde{e}^{2}]_{\ast}=c\widetilde{e}^{3},\quad
[\widetilde{e}^{2},\widetilde{e}^{2},\widetilde{e}^{2}]_{\ast}=d\widetilde{e}^{3},
\end{align*}
where $a$, $b$, $c$, $d$ are any non zero real numbers.
\item
$\mathcal{A}^{\ast}$ as second $3$-Leibniz algebra
\begin{align*}
[\widetilde{e}^{1},\widetilde{e}^{1},\widetilde{e}^{1}]_{\ast}=a\widetilde{e}^{3},\quad
[\widetilde{e}^{1},\widetilde{e}^{1},\widetilde{e}^{2}]_{\ast}=b\widetilde{e}^{3},\quad
[\widetilde{e}^{1},\widetilde{e}^{2},\widetilde{e}^{1}]_{\ast}=c\widetilde{e}^{3},\quad
[\widetilde{e}^{1},\widetilde{e}^{2},\widetilde{e}^{2}]_{\ast}=d\widetilde{e}^{3},
\end{align*}
where $a$, $b$, $c$, $d$ are any non zero real numbers.
\item
$\mathcal{A}^{\ast}$ as third $3$-Leibniz algebra
\begin{align*}
& [\widetilde{e}^{1},\widetilde{e}^{1},\widetilde{e}^{1}]_{\ast}=a\widetilde{e}^{3},\quad
[\widetilde{e}^{1},\widetilde{e}^{1},\widetilde{e}^{2}]_{\ast}=b\widetilde{e}^{3},\\
&[\widetilde{e}^{1},\widetilde{e}^{2},\widetilde{e}^{1}]_{\ast}=c\widetilde{e}^{3},\quad
[\widetilde{e}^{1},\widetilde{e}^{2},\widetilde{e}^{2}]_{\ast}=d\widetilde{e}^{3},\\
&[\widetilde{e}^{2},\widetilde{e}^{1},\widetilde{e}^{1}]_{\ast}=m\widetilde{e}^{3},\quad
[\widetilde{e}^{2},\widetilde{e}^{1},\widetilde{e}^{2}]_{\ast}=f\widetilde{e}^{3},\\
&[\widetilde{e}^{2},\widetilde{e}^{2},\widetilde{e}^{1}]_{\ast}=g\widetilde{e}^{3},\quad
[\widetilde{e}^{2},\widetilde{e}^{2},\widetilde{e}^{2}]_{\ast}=h\widetilde{e}^{3},
\end{align*}
}
where $a$, $b$, $c$, $d$, $m$, $f$, $g$, $h$ are any non zero real numbers.
\end{itemize}
\end{enumerate}
\end{ex}	

\section{Correspondence between $3$-Leibniz bialgebra and its associated Leibniz bialgebra}
In this section, we determine the type of the associated Leibniz algebra for any  three types of $3$-Leibniz algebras and prove a theorem about the correspondence
between $3$-Leibniz bialgebra and its associated Leibniz bialgebra \cite{Reza1}.
\begin{itemize}
\item
If $\mathcal{A}$ be the first $3$-Leibniz algebra then its associated Leibniz algebra  $\mathfrak{g}=\mathcal{A}\otimes \mathcal{A}$ with the following bracket
\begin{align}
[x_{1}\otimes x_{2},y_{1}\otimes y_{2}]=[x_{1},y_{1},y_{2}]\otimes x_{2}+x_{1}\otimes [x_{2},y_{1},y_{2}],\label{Leibnizbracket1}
\end{align}
will be a right Leibniz algebra.
\item
If $\mathcal{A}$ be the second $3$-Leibniz algebra then its associated Leibniz algebra $\mathfrak{g}=\mathcal{A}\otimes \mathcal{A}$ with the following brackets
\begin{align}
& [x_{1}\otimes x_{2},y_{1}\otimes y_{2}]=[x_{1},y_{1},x_{2}]\otimes y_{2}+y_{1}\otimes [x_{1},y_{2},x_{2}],\label{Leibnizbracket21}\\
& [x_{1}\otimes x_{2},y_{1}\otimes y_{2}]=[y_{1},x_{1},y_{2}]\otimes x_{2}+x_{1}\otimes [y_{1},x_{2},y_{2}],\label{Leibnizbracket22}
\end{align}
will be a left and a right Leibniz algebra respectively.
\item
If $\mathcal{A}$ be the third $3$-Leibniz algebra then its associated Leibniz algebra $\mathfrak{g}=\mathcal{A}\otimes \mathcal{A}$ with the following bracket
\begin{align}
[x_{1}\otimes x_{2},y_{1}\otimes y_{2}]=[x_{1},x_{2},y_{1}]\otimes y_{2}+y_{1}\otimes [x_{1},x_{2},y_{2}],\label{Leibnizbracket3}
\end{align}
will be a left Leibniz algebra.
\end{itemize}
Now we prove a theorem about the correspondence between $3$-Leibniz bialgebra and its associated Leibniz bialgebra \cite{Reza1}.
\begin{thm}\label{corresponding}
{\textit{Let $(\mathcal{A},\gamma)$ be a $3$-Leibniz bialgebra and $\mathfrak{g}=\mathcal{A}\otimes\mathcal{A}$ be its associated Leibniz algebra then there exist a linear map
$\delta:\mathfrak{g}\longrightarrow\mathfrak{g}\otimes\mathfrak{g}$ which it defines a Leibniz bialgebra structure on $\mathfrak{g}$. Conversely, if $(\mathfrak{g},\delta)$ be a Leibniz bialgebra such that $\mathfrak{g}=\mathcal{A}\otimes\mathcal{A}$ and $\mathcal{A}$ be a $3$-Leibniz algebra then there exist a
linear map $\gamma:\mathcal{A}\longrightarrow \mathcal{A}\otimes \mathcal{A}\otimes\mathcal{A}$ such that it defines a $3$-Leibniz bialgebra structure on $\mathcal{A}$.}}
\end{thm}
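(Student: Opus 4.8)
The plan is to mirror, at the level of bialgebras, the passage from a $3$-Leibniz algebra to its associated Leibniz algebra (\cite{Gaut}): I will build the Leibniz cocommutator $\delta$ out of $\gamma$ by transposing that construction on the dual side, and then verify the two axioms of a Leibniz bialgebra (\cite{Reza1}) by reducing them to the axioms already assumed for $(\mathcal{A},\gamma)$.

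For the forward implication, recall first that by the Proposition of Section~3 the transpose $\gamma^{t}$ makes $\mathcal{A}^{\ast}$ into a $3$-Leibniz algebra with bracket $[\cdot,\cdot,\cdot]_{\ast}$. Applying the associated-Leibniz-algebra construction of \cite{Gaut} to $\mathcal{A}^{\ast}$ gives a Leibniz algebra structure on $\mathcal{A}^{\ast}\otimes\mathcal{A}^{\ast}$, whose handedness is fixed by the type of $\mathcal{A}^{\ast}$ exactly as in \eqref{Leibnizbracket1}--\eqref{Leibnizbracket3}. Under the natural identifications $\mathcal{A}^{\ast}\otimes\mathcal{A}^{\ast}\cong(\mathcal{A}\otimes\mathcal{A})^{\ast}=\mathfrak{g}^{\ast}$ and $(\mathfrak{g}\otimes\mathfrak{g})^{\ast}\cong\mathfrak{g}^{\ast}\otimes\mathfrak{g}^{\ast}$ (valid since $\mathcal{A}$ is finite dimensional), I then \emph{define} $\delta:\mathfrak{g}\to\mathfrak{g}\otimes\mathfrak{g}$ to be the transpose of this Leibniz bracket on $\mathfrak{g}^{\ast}$. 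Unwinding the pairing \eqref{pairing} shows that $\delta(x_{1}\otimes x_{2})$ is a sum of terms in which $\gamma$ is applied to one of the two legs of $x_{1}\otimes x_{2}$ while the other leg is carried along, the slot assignments dictated by \eqref{Leibnizbracket1}--\eqref{Leibnizbracket3}; in particular $\delta$ is a well-defined linear map, and the second axiom of a Leibniz bialgebra — that $\delta^{t}$ be a Leibniz bracket on $\mathfrak{g}^{\ast}$ — holds tautologically, since $\delta^{t}$ is by construction the Leibniz bracket of the associated Leibniz algebra of $\mathcal{A}^{\ast}$.

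What remains is that $\delta$ is a $1$-cocycle on $\mathfrak{g}$, i.e. satisfies the appropriate version among \eqref{1-cocycle1lorr-r}--\eqref{1-cocyclelorr-l}, the choice being dictated by the type of $\mathcal{A}$ through \eqref{Leibnizbracket1}--\eqref{Leibnizbracket3}. I would verify this by substituting the explicit expression for $\delta$ into that cocycle identity and expanding: each summand is either a placement of one of the $\gamma$-cocycle identities \eqref{1-cocycle1}--\eqref{1-cocycle3} into a pair of tensor legs (which holds by hypothesis), or a cross term, and the cross terms cancel in pairs by the fundamental identity \eqref{nli} for $\mathcal{A}$ — equivalently, by the fact already used that $\mathfrak{g}$ is itself a Leibniz algebra. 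I expect this to be the main obstacle: keeping the bookkeeping straight between the left and right $\mathfrak{g}$-actions on $\mathfrak{g}\otimes\mathfrak{g}$ and the actions $\rho_{1},\rho_{2},\rho_{3}$ of \eqref{r1f}--\eqref{r2t} on $\mathcal{A}^{\otimes 3}$, and handling the three types of $3$-Leibniz algebra uniformly. It is cleanest to do this once in the basis $\{x_{i}\otimes x_{j}\}$ of $\mathfrak{g}$, where the $1$-cocycle and Leibniz identities for $(\mathfrak{g},\delta)$ reduce, after relabelling each index pair as a single $\mathfrak{g}$-index, to the relations \eqref{1-cocycle1'}--\eqref{1-cocycle3'} and \eqref{f3li1'}--\eqref{t3li3'} for $\mathcal{A}$ and $\mathcal{A}^{\ast}$.

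For the converse I would run the same identification backwards. If $(\mathfrak{g},\delta)$ is a Leibniz bialgebra with $\mathfrak{g}=\mathcal{A}\otimes\mathcal{A}$ the associated Leibniz algebra of a $3$-Leibniz algebra $\mathcal{A}$, then $\delta^{t}$ is a Leibniz bracket on $\mathfrak{g}^{\ast}\cong\mathcal{A}^{\ast}\otimes\mathcal{A}^{\ast}$; since $\mathfrak{g}$ carries the associated-Leibniz bracket \eqref{Leibnizbracket1}--\eqref{Leibnizbracket3}, the $1$-cocycle condition on $\delta$ forces $\delta^{t}$ to again be of associated-Leibniz-algebra form for a unique trilinear bracket $[\cdot,\cdot,\cdot]_{\ast}$ on $\mathcal{A}^{\ast}$, which is then a $3$-Leibniz bracket, and $\gamma:=([\cdot,\cdot,\cdot]_{\ast})^{t}:\mathcal{A}\to\mathcal{A}^{\otimes 3}$ is the map we want. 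Reading the two Leibniz-bialgebra axioms for $(\mathfrak{g},\delta)$ in the structure-constant form \eqref{1-cocycle1'}--\eqref{1-cocycle3'}, \eqref{f3li1'}--\eqref{t3li3'} then says exactly that $(\mathcal{A},\gamma)$ is a $3$-Leibniz bialgebra, and one further application of the Proposition of Section~3 exhibits it as the pair $(\mathcal{A},\mathcal{A}^{\ast})$. As in the forward direction, the delicate point is to confirm that the structure-constant form of the associated-Leibniz-algebra construction neither drops nor adds relations on the relevant indices, so that the correspondence $\gamma\leftrightarrow\delta$ is a genuine bijection on these data; this is precisely where the structure-constant computations of Section~4, together with the bracket formulas \eqref{Leibnizbracket1}--\eqref{Leibnizbracket3}, are needed.
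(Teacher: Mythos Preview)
Your forward direction is essentially the paper's argument: define $\delta$ as the transpose of the associated Leibniz bracket on $\mathcal{A}^{\ast}\otimes\mathcal{A}^{\ast}$, so that the second bialgebra axiom is tautological, and then check the $1$-cocycle condition. The only cosmetic difference is that the paper writes $\delta$ explicitly via flip operators $\sigma_{ij}$ (e.g.\ $\delta=\sigma_{34}^{t}\circ\sigma_{24}^{t}\circ(\gamma\otimes I_{\mathcal{A}})+I_{\mathcal{A}}\otimes\gamma$ in the first case) and verifies the cocycle identity by a direct Sweedler-type computation, whereas you propose to do the same check in structure constants; either route works.

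Your converse, however, has a real gap. You assert that ``the $1$-cocycle condition on $\delta$ forces $\delta^{t}$ to again be of associated-Leibniz-algebra form for a unique trilinear bracket $[\cdot,\cdot,\cdot]_{\ast}$ on $\mathcal{A}^{\ast}$.'' That step is not justified and is false as stated: if, say, $\mathcal{A}$ is abelian then $\mathfrak{g}$ is abelian, the $1$-cocycle condition on $\delta$ is vacuous, and \emph{any} Leibniz bracket on $\mathfrak{g}^{\ast}=(\mathcal{A}\otimes\mathcal{A})^{\ast}$ yields a Leibniz bialgebra, yet most such brackets are not of the very special shape \eqref{Leibnizbracket1}--\eqref{Leibnizbracket3} in $\gamma^{t}$. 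The cocycle condition is a compatibility between $\delta$ and the bracket of $\mathfrak{g}$; it does not by itself constrain the tensorial shape of $\delta$ on generators. The paper does not attempt this recovery either --- it simply declares the inverse ``clear,'' which should be read as saying that the forward map $\gamma\mapsto\delta$ is invertible on its image (given $\delta$ of the explicit flip-operator form above, one reads off $\gamma$), not that an arbitrary Leibniz bialgebra structure on $\mathcal{A}\otimes\mathcal{A}$ descends to $\mathcal{A}$. If you want your converse to match the paper, you should restrict to those $\delta$ arising from the forward construction; if you want the stronger statement you wrote, you would need an additional hypothesis singling out that class.
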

\begin{proof} Since $\mathcal{A}$ can be a $3$-Leibniz algebra of various three types the proof of the theorem divided to the following three parts\footnote{Here we write only the proof of one case. The proof of the other cases is similar.}.
\begin{enumerate}	
\item
If $(\mathcal{A},\gamma)$ be a $3$-Leibniz bialgebra such that $(\ref{1-cocycle1})$ is valid for $\gamma[x_{1},x_{2},x_{3}]$ then $\mathfrak{g}=\mathcal{A}\otimes\mathcal{A}$
will be a right Leibniz algebra with  the bracket \eqref{Leibnizbracket1}. $\mathcal{A}^{\ast}$ can be a $3$-Leibniz algebra of various three types.
\begin{enumerate}
\item
If $\mathcal{A}^{\ast}$ be the first $3$-Leibniz algebra then  $\mathfrak{g}^{\ast}=\mathcal{A}^{\ast}\otimes\mathcal{A}^{\ast}$ with the bracket \eqref{Leibnizbracket1} will be a right Leibniz algebra. We want to prove there exist a linear map $\delta:\mathfrak{g}\longrightarrow\mathfrak{g}\otimes\mathfrak{g}$ such that it defines a Leibniz bialgebra structure on $\mathfrak{g}$. We can rewrite the bracket $[.,.]_{\ast}$ on $\mathfrak{g}^{\ast}$ with $\gamma^{t}$ as follows:
\begin{align}
[\widetilde{x}^{1}\otimes\widetilde{x}^{2},\widetilde{y}^{1}\otimes\widetilde{y}^{2}]_{\ast}=
\gamma^{t}(\widetilde{x}^{1}\otimes\widetilde{y}^{1}\otimes\widetilde{y}^{2})\otimes \widetilde{x}^{2}+\widetilde{x}^{1}\otimes\gamma^{t}(\widetilde{x}^{2}\otimes \widetilde{y}^{1}\otimes\widetilde{y}^{2}),\label{bracket*}
\end{align}
using the following flip operators
\begin{align}
&\sigma_{24}:{\mathcal{A}^{\ast}}^{\otimes 4}\longrightarrow {\mathcal{A}^{\ast}}^{\otimes 4}
,\,\,\sigma_{24}(\xi_{1}\otimes\xi_{2}\otimes\xi_{3}\otimes\xi_{4})=\xi_{1}\otimes\xi_{4}\otimes\xi_{3}\otimes\xi_{2}\\
&\sigma_{34}:{\mathcal{A}^{\ast}}^{\otimes 4}\longrightarrow {\mathcal{A}^{\ast}}^{\otimes 4}
,\,\,\sigma_{34}(\xi_{1}\otimes\xi_{2}\otimes\xi_{3}\otimes\xi_{4})=\xi_{1}\otimes\xi_{2}\otimes\xi_{4}\otimes\xi_{3},
\end{align}
\eqref{bracket*} can be written as
\begin{align}
[\widetilde{x}^{1}\otimes\widetilde{x}^{2},\widetilde{y}^{1}\otimes\widetilde{y}^{2}]_{\ast}=\left((\gamma^{t}\otimes I_{\cal{A}^{\ast}})\circ\sigma_{24}\circ\sigma_{34}+I_{\cal{A}^{\ast}}\otimes\gamma^{t}\right)(\widetilde{x}^{1}\otimes\widetilde{x}^{2}\otimes\widetilde{y}^{1}\otimes\widetilde{y}^{2}),
\end{align}
setting
\begin{align}
\delta^{t}:=[,]_{\ast},
\end{align}
then we have
\begin{align}
\delta^{t}=(\gamma^{t}\otimes I_{\mathcal{A}^{\ast}})\circ\sigma_{24}\circ\sigma_{34}+I_{\mathcal{A}^{\ast}}\otimes\gamma^{t},
\end{align}
and so
\begin{align}
\delta(x_1\otimes x_2)=\left(\sigma_{34}^{t}\circ\sigma_{24}^{t}\circ(\gamma\otimes I_\mathcal{A})+I_\mathcal{A}\otimes \gamma\right)(x_1\otimes x_2),
\end{align}
where $\sigma_{24}^{t},\sigma_{34}^{t}:\mathcal{A}^{\otimes 4}\longrightarrow \mathcal{A}^{\otimes 4}$  act as follows
\begin{align}
\sigma_{24}^{t}(x_1\otimes x_2\otimes x_3\otimes x_4)=x_1\otimes x_4\otimes x_3\otimes x_2,\\
\sigma_{34}^{t}(x_1\otimes x_2\otimes x_3\otimes x_4)=x_1\otimes x_2\otimes x_4\otimes x_3,
\end{align}
then for any $X,Y\in \mathfrak{g}$ we have
\begin{align}
\begin{split}
&\delta[X,Y]=\delta[x_1\otimes x_2,y_1\otimes y_2]=\delta([x_1,y_1,y_2]\otimes x_2+x_1\otimes [x_2,y_1,y_2])\\
&=\left(\sigma_{34}^{t}\circ\sigma_{24}^{t}\circ(\gamma\otimes I_\mathcal{A})+I_\mathcal{A}\otimes \gamma\right)([x_1,y_1,y_2]\otimes x_2+x_1\otimes [x_2,y_1,y_2])\\
& =\left(1_\mathfrak{g}\otimes \mathrm{ad}_Y^{(r)}+\mathrm{ad}_Y^{(r)}\otimes1_\mathfrak{g} \right)\delta(X)
\end{split}
\end{align}
where in  the above identity we use $\gamma(x_{1})=x_{1}^{1}\otimes x_{1}^{2}\otimes x_{1}^{3}$ and $\gamma(x_{2})=x_{2}^{1}\otimes x_{2}^{2}\otimes x_{2}^{3}$.\\
In the same way one can prove the following cases:
\item
If $\mathcal{A}^{\ast}$ be the second $3$-Leibniz algebra then
\begin{enumerate}
\item
If $\mathfrak{g}^{\ast}=\mathcal{A}^{\ast}\otimes\mathcal{A}^{\ast}$  be a left Leibniz algebra with  the bracket \eqref{Leibnizbracket21} then
\begin{align}
\delta(x_1\otimes x_2)=\left(\sigma_{23}^t\circ(\gamma\otimes I_\mathcal{A})+\sigma_{34}^t\circ\sigma_{24}^t\circ\sigma_{12}^t\circ(I_\mathcal{A}\otimes\gamma)\right)(x_1\otimes x_2).
\end{align}
\item
If $\mathfrak{g}^{\ast}=\mathcal{A}^{\ast}\otimes\mathcal{A}^{\ast}$  be a right Leibniz algebra with the bracket  \eqref{Leibnizbracket22} then
\begin{align}
\delta(x_1\otimes x_2)=\left(\sigma_{34}^t\circ\sigma_{24}^t\circ\sigma_{12}^t\circ(\gamma\otimes I_\mathcal{A})+\sigma_{23}^t\circ(I_\mathcal{A}\otimes\gamma)\right)(x_1\otimes x_2).
\end{align}
\end{enumerate}
\item
If $\mathcal{A}^{\ast}$ be the third $3$-Leibniz algebra then $\mathfrak{g}^{\ast}=\mathcal{A}^{\ast}\otimes\mathcal{A}^{\ast}$  will be a left Leibniz algebra with the bracket  \eqref{Leibnizbracket3}. We have
\begin{align}
\delta(x_1\otimes x_2)=\left(\gamma\otimes I_\mathcal{A}+\sigma_{23}^t\circ \sigma_{23}^t\circ(I_\mathcal{A}\otimes\gamma)\right)(x_1\otimes x_2).
\end{align}
In all above cases $1$-cocycle condition is
\begin{align}
\delta[X,Y]=\left(1_\mathfrak{g}\otimes \mathrm{ad}_Y^{(r)}+\mathrm{ad}_Y^{(r)}\otimes1_\mathfrak{g} \right)\delta(X),\,\,\, \forall X,Y\in\mathfrak{g}.
\end{align}
\end{enumerate}
\item
If $(\mathcal{A},\gamma)$ be a $3$-Leibniz bialgebra such that \eqref{1-cocycle2} is valid for $\gamma[x_{1},x_{2},x_{3}]$ then we will have the following cases:
\begin{enumerate}
\item
If $\mathcal{A}^{\ast}$ be the first $3$-Leibniz algebra then $\mathfrak{g}^{\ast}=\mathcal{A}^{\ast}\otimes\mathcal{A}^{\ast}$  will be a right Leibniz algebra with  the bracket  \eqref{Leibnizbracket1}.
\begin{enumerate}
\item
If $\mathfrak{g}=\mathcal{A}\otimes \mathcal{A}$ be a left Leibniz algebra with the bracket
\eqref{Leibnizbracket21} then we will have
\begin{align*}
\delta[X,Y]=\left(1_\mathfrak{g}\otimes \mathrm{ad}_X^{(l)}+\mathrm{ad}_X^{(l)}\otimes1_\mathfrak{g} \right)\delta(Y),\,\,\,\forall X,Y\in\mathfrak{g}.
\end{align*}
\item
If $\mathfrak{g}=\mathcal{A}\otimes \mathcal{A}$ be a right Leibniz algebra with the bracket
\eqref{Leibnizbracket22} then we will have
\begin{align*}
\delta[X,Y]=\left(1_\mathfrak{g}\otimes \mathrm{ad}_Y^{(r)}+\mathrm{ad}_Y^{(r)}\otimes1_\mathfrak{g} \right)\delta(X),\,\,\,\forall X,Y\in\mathfrak{g},
\end{align*}
in the above two cases we have
\begin{align}
\delta(x_1\otimes x_2)=\left(\sigma_{34}^t\circ\sigma_{24}^t\circ(\gamma\otimes I_\mathcal{A})+I_\mathcal{A}\otimes\gamma\right)(x_1\otimes x_2).
\end{align}
\end{enumerate}
\item
If $\mathcal{A}^{\ast}$ be the second $3$-Leibniz algebra then
\begin{enumerate}
\item
If $\mathfrak{g}=\mathcal{A}\otimes \mathcal{A}$ and $\mathfrak{g}^{\ast}=\mathcal{A}^{\ast}\otimes\mathcal{A}^{\ast}$ are both a left Leibniz algebra with  the bracket \eqref{Leibnizbracket21} then we will have
\begin{align*}
\delta[X,Y]=\left(1_\mathfrak{g}\otimes \mathrm{ad}_X^{(l)}+\mathrm{ad}_X^{(l)}\otimes1_\mathfrak{g} \right)\delta(Y),\,\,\,\forall X,Y\in\mathfrak{g}.
\end{align*}
\item
If $\mathfrak{g}=\mathcal{A}\otimes \mathcal{A}$ be a right Leibniz algebra with the bracket \eqref{Leibnizbracket22} and $\mathfrak{g}^{\ast}=\mathcal{A}^{\ast}\otimes\mathcal{A}^{\ast}$ be a left Leibniz algebra with the bracket  \eqref{Leibnizbracket21} then we will have
\begin{align*}
\delta[X,Y]=\left(1_\mathfrak{g}\otimes \mathrm{ad}_Y^{(r)}+\mathrm{ad}_Y^{(r)}\otimes1_\mathfrak{g} \right)\delta(X),\,\,\,\forall X,Y\in\mathfrak{g},
\end{align*}
where in the above two cases we have
\begin{align}
\delta(x_1\otimes x_2)=\left(\sigma_{23}^t\circ(\gamma\otimes I_\mathcal{A})+\sigma_{34}^t\circ\sigma_{24}^t\circ\sigma_{12}^t\circ(I_\mathcal{A}\otimes\gamma)\right)(x_1\otimes x_2).
\end{align}	
\item
If $\mathfrak{g}=\mathcal{A}\otimes \mathcal{A}$ be a left Leibniz algebra with the bracket \eqref{Leibnizbracket21} and $\mathfrak{g}^{\ast}=\mathcal{A}^{\ast}\otimes\mathcal{A}^{\ast}$ be a right Leibniz algebra with the bracket  \eqref{Leibnizbracket22} then we will have
\begin{align*}
\delta[X,Y]=\left(1_\mathfrak{g}\otimes \mathrm{ad}_X^{(l)}+\mathrm{ad}_X^{(l)}\otimes1_\mathfrak{g} \right)\delta(Y),\,\,\,\forall X,Y\in\mathfrak{g}
\end{align*}
\item
If $\mathfrak{g}=\mathcal{A}\otimes \mathcal{A}$ and $\mathfrak{g}^{\ast}=\mathcal{A}^{\ast}\otimes\mathcal{A}^{\ast}$ are both a right Leibniz algebra with the bracket  \eqref{Leibnizbracket22} then we will have
\begin{align*}
\delta[X,Y]=\left(1_\mathfrak{g}\otimes \mathrm{ad}_Y^{(r)}+\mathrm{ad}_Y^{(r)}\otimes1_\mathfrak{g} \right)\delta(X),\,\,\,\forall X,Y\in\mathfrak{g},
\end{align*}
where in the above two cases we have
\begin{align}
\delta(x_1\otimes x_2)=\left(\sigma_{34}^t\circ\sigma_{24}^t\circ\sigma_{12}^t
\circ(\gamma\otimes \sigma_{23}^t\circ I_\mathcal{A})+\circ(I_\mathcal{A}\otimes\gamma)\right)(x_1\otimes x_2).
\end{align}
\end{enumerate}
\item
If $\mathcal{A}^{\ast}$ be the third $3$-Leibniz algebra then we will have:
\begin{enumerate}
\item
If	$\mathfrak{g}=\mathcal{A}\otimes \mathcal{A}$ be a left Leibniz algebra with  the bracket \eqref{Leibnizbracket21} and $\mathfrak{g}^{\ast}=\mathcal{A}^{\ast}\otimes\mathcal{A}^{\ast}$ be a left Leibniz algebra with  the bracket \eqref{Leibnizbracket3} then we will have
\begin{align*}
\delta[X,Y]=\left(1_\mathfrak{g}\otimes \mathrm{ad}_X^{(l)}+\mathrm{ad}_X^{(l)}\otimes1_\mathfrak{g} \right)\delta(Y),\,\,\,\forall X,Y\in\mathfrak{g}.
\end{align*}
\item
If $\mathfrak{g}=\mathcal{A}\otimes \mathcal{A}$ be a right Leibniz algebra with the bracket \eqref{Leibnizbracket22} and $\mathfrak{g}^{\ast}=\mathcal{A}^{\ast}\otimes\mathcal{A}^{\ast}$ be a left Leibniz algebra with the bracket  \eqref{Leibnizbracket3}  then we will have
\begin{align*}
\delta[X,Y]=\left(1_\mathfrak{g}\otimes \mathrm{ad}_Y^{(r)}+\mathrm{ad}_Y^{(r)}\otimes1_\mathfrak{g} \right)\delta(X),\,\,\,\forall X,Y\in\mathfrak{g},
\end{align*}
where in the above two cases we have
\begin{align}
\delta(x_1\otimes x_2)=\left(\gamma\otimes I_\mathcal{A} +\sigma_{23}^t\circ\sigma_{12}^t\circ(I_\mathcal{A}\otimes\gamma)\right)(x _1\otimes x_2).
\end{align}	
\end{enumerate}		 	
\end{enumerate}
\item
If $(\mathcal{A},\gamma)$ be a $3$-Leibniz bialgebra such that $(\ref{1-cocycle3})$ is valid for $\gamma[x_{1},x_{2},x_{3}]$ then $\mathfrak{g}=\mathcal{A}\otimes\mathcal{A}$ with the bracket \eqref{Leibnizbracket3} is a left Leibniz algebra and also $\mathcal{A}^{\ast}$ can be  a $3$-Leibniz algebra of various three types
\begin{enumerate}
\item
If $\mathcal{A}^{\ast}$ be the first $3$-Leibniz algebra then   $\mathfrak{g}^{\ast}=\mathcal{A}^{\ast}\otimes\mathcal{A}^{\ast}$ with the bracket  \eqref{Leibnizbracket1} is a right Leibniz algebra then we will have
\begin{align}
\delta(x_1\otimes x_2)=\left(\sigma_{34}^t\circ\sigma_{24}^t\circ(\gamma\otimes I_\mathcal{A}) +I_\mathcal{A}\otimes\gamma\right)(x _1\otimes x_2).
\end{align}	
\item
If $\mathcal{A}^{\ast}$ be the second $3$-Leibniz algebra then we will have
\begin{enumerate}
\item
If $\mathfrak{g}=\mathcal{A}\otimes \mathcal{A}$ be a left Leibniz algebra with the bracket \eqref{Leibnizbracket21} then we will have
\begin{align}
\delta(x_1\otimes x_2)=\left(\sigma_{23}^t\circ(\gamma\otimes I_\mathcal{A}) +\sigma_{34}^t\circ\sigma_{24}^t\circ\sigma_{12}^t\circ(I_\mathcal{A}\otimes\gamma)\right)(x _1\otimes x_2).
\end{align}
\item
If $\mathfrak{g}=\mathcal{A}\otimes \mathcal{A}$ be a right Leibniz algebra with the bracket \eqref{Leibnizbracket22} then we will have
\begin{align}
\delta(x_1\otimes x_2)=\left(\sigma_{34}^t\circ\sigma_{24}^t\circ\sigma_{12}^t\circ(\gamma\otimes I_\mathcal{A}) +\sigma_{23}^t\circ(I_\mathcal{A}\otimes\gamma)\right)(x _1\otimes x_2).
\end{align}
\end{enumerate}	
\item
If $\mathcal{A}^{\ast}$ be the second $3$-Leibniz algebra then $\mathfrak{g}^{\ast}=\mathcal{A}^{\ast}\otimes\mathcal{A}^{\ast}$ with  the bracket \eqref{Leibnizbracket3} will be a left Leibniz algebra. We have
\begin{align}
\delta(x_1\otimes x_2)=\left(\gamma\otimes I_\mathcal{A} +\sigma_{23}^t\circ\sigma_{12}^t\circ(I_\mathcal{A}\otimes\gamma)\right)(x _1\otimes x_2),
\end{align}
where in  the above cases we have
\begin{align*}
\delta[X,Y]=\left(1_\mathfrak{g}\otimes \mathrm{ad}_X^{(l)}+\mathrm{ad}_X^{(l)}\otimes1_\mathfrak{g} \right)\delta(Y),\,\,\,\forall X,Y\in\mathfrak{g}.
\end{align*}
\end{enumerate}
\end{enumerate}
The proof of the inverse is clear.
\end{proof} 
	
\section{$3$-Lie bialgebras}
In this section, we suppose that $\mathcal{A}$ is a $3$-Lie algebra as a special case, then we have the following fundamental identity for $n=3$
\begin{align}
[x_1, x_2,[y_1,y_2,y_3]]=[[x_1, x_2,y_1],y_2,y_3]+[y_1,[x_1, x_2,y_2],y_3]
+[y_1,y_2,[x_1, x_2,y_3]],\label{fun3}
\end{align}
We want to define a bialgebra structure for $3$-Lie algebra, similar to $3$-Leibniz algebra with a little difference.
\begin{rem}
In Definition \ref{rep3l} if $\rho$ satisfies only in  the identity \eqref{c1} we say $\rho$ is a {\textit semi-representation} of $\mathcal{A}$ in $V$.	
\end{rem}
If $V=\mathcal{A}$ it is clearly that $\rho:\mathcal{A}\otimes \mathcal{A}\longrightarrow End(\mathcal{A})$ with the following relation
\begin{align*}
\rho(x_1, x_2)(z)=\mathrm{ad}_{(x_1, x_2,\hat{x_3})}(z)=[x_1, x_2,z],
\end{align*}
is a representation of $\mathcal{A}$ in $\mathcal{A}$.
\begin{rem}
If $\rho_i:\mathcal{A}\otimes \mathcal{A}\longrightarrow End(V_i),i=1,2,3$ be three representations of $\mathcal{A}$ in vector spaces $V_i$ then $\rho:\mathcal{A}\otimes \mathcal{A}\longrightarrow End(V_1\otimes V_2\otimes V_3)$ with the following identities
\begin{align*}
\rho(x_1, x_2)(y_1\otimes y_2\otimes y_3)&=\rho_1(x_1, x_2)(y_1)\otimes y_2\otimes y_3+y_1\otimes\rho_2(x_1, x_2)(y_2)\otimes y_3\\
&+y_1\otimes y_2\otimes\rho_3(x_1, x_2)(y_3),
\end{align*}
$\forall x_1, x_2\in \mathcal{A},\forall y_i\in V_i,i=1,2,3$ will not be a representation  of $\mathcal{A}$ in $V_1\otimes V_2\otimes V_3$ but it is a semi-representation of $\mathcal{A}$ in $V_1\otimes V_2\otimes V_3$. Note that
in Lie algebra case the tensor product of representations of Lie algebra in vector spaces $V_i,i=1, . . ., n$ is a representation of the Lie algebra in $V_1\otimes V_2\otimes...\otimes V_n$. If $V_1=V_2=V_3=\mathcal{A}$ then $\rho:\mathcal{A}\otimes \mathcal{A}\longrightarrow End(\mathcal{A}^{\otimes 3})$ with the following relation
{\small
\begin{align}
\begin{split}
\rho(y_1,y_2)(z_1\otimes z_2\otimes z_3)&=(\mathrm{ad}_{(y_1,y_2,\hat{y}_3)}\otimes 1\otimes 1+1\otimes \mathrm{ad}_{(y_1,y_2,\hat{y}_3)}\otimes 1+1\otimes 1\otimes \mathrm{ad}_{(y_1,y_2,\hat{y}_3)})(z_1\otimes z_2\otimes z_3)\\
&:={ad}^{(3)}_{(y_1,y_2,\hat{y}_3)}(z_1\otimes z_2\otimes z_3),\label{semi-rep}
\end{split}
\end{align}
}
will be a semi-representation of $\mathcal{A}$ in $\mathcal{A}^{\otimes 3}$.	
\end{rem}
We need to generalize the definition \ref{cohol} for any representation of $\mathcal{A}$ in any vector space $V$.
\begin{dfn}
Let $\mathcal{A}$ be a $3$-Lie algebra, a $V$-valued $p$-cochain is a linear map \linebreak $\psi:(\mathcal{A}\otimes \mathcal{A})^{\otimes(p-1)}\otimes \mathcal{A}\longrightarrow V$. We denote the space of $V$-valued $p$-cochains with $\Gamma^{p}(\mathcal{A},V)$, the coboundary operator is given by:
\begin{align*}
d^{p}\psi(x_1, . . . ,x_{2p+1})&=\sum_{j=1}^{p}\sum_{k=2j+1}^{2p+1}(-1)^{j}\psi(x_1, . . ., \hat{x}_{j-1},\hat{x}_j, . . ., [x_{2j-1},x_{2j},x_k], . . ., x_{2p+1})\\
&\;\;+\sum_{k=1}^{p}\rho(x_{2k-1},x_{2k},\psi(x_1, . . ., \hat{x}_{2k-1},\hat{x}_{2k}, . . ., x_{2p+1}))\nonumber\\
&\;\;-(-1)^{p+1}\rho(x_{2p-1},x_{2p+1},\psi(x_1, . . ., x_{2p-2},x_{2p}),)\\
&\;\;+(-1)^{p+1}\rho(x_{2p},x_{2p+1},\psi(x_1, . . ., x_{2p-1})),
\end{align*}
where $\rho:\mathcal{A}\otimes \mathcal{A}\longrightarrow End(V)$ is a representation of $\mathcal{A}$ in $V$.\\
For $p=1$ we have
\begin{align*}
d^{1}\psi(x_1, x_2, x_3)=-\psi([x_1, x_2, x_3])+\rho(x_1, x_2,\psi(x_3))-\rho(x_1, x_3,\psi(x_2))+\rho(x_2, x_3,\psi(x_1)).
\end{align*}
\end{dfn}
\begin{dfn}
A $3$-Lie bialgebra $(\mathcal{A},\gamma)$ is a $3$-Lie algebra $\mathcal{A}$ with a linear map (cocommutator) $\gamma:\mathcal{A}\longrightarrow \mathcal{A}^{\otimes 3}$ such that
\begin{itemize}
\item
$\gamma$ is a $1$-cocycle on $\mathcal{A}$ with values in $\mathcal{A}^{\otimes 3}$,
\begin{align}
\gamma[y_1,y_2,y_3]=\rho(y_2,y_3,\gamma(y_1))+\rho(y_3,y_1,\gamma(y_2))+\rho(y_1,y_2,\gamma(y_3)),\label{1-cocycle3lie}
\end{align}
where $\rho$ is a semi-representation of $\mathcal{A}$ in
$\mathcal{A}^{\otimes 3}$.
\item
$\gamma^{t}:{\mathcal{A}^{\ast}}^{\otimes 3}\longrightarrow \mathcal{A}$ defines a $3$-Lie bracket on $\mathcal{A}^{\ast}$.		
\end{itemize}
If we use \eqref{notation} then we will have  \eqref{pairing} similar to $3$-Leibniz bialgebra.	
\end{dfn}
By the use of \eqref{semi-rep} $1$-cocycle condition \eqref{1-cocycle3lie} can be rewritten as follows
\begin{align}
\gamma[y_1,y_2,y_3]={ad}^{(3)}_{(y_2,y_3,\hat{y}_1)}\gamma(y_1)
+{ad}^{(3)}_{(y_3,y_1,\hat{y}_2)}\gamma(y_2)
+{ad}^{(3)}_{(y_1,y_2,\hat{y}_3)}\gamma(y_3).\label{1-cocycle3l}
\end{align}
\begin{prop}
If $(\mathcal{A},\gamma)$ be a $3$-Lie bialgebra, and $\mu$ be a $3$-Lie bracket of $\mathcal{A}$, then $(\mathcal{A},\mu^{t})$ will be a $3$-Lie bialgebra,
where $\gamma^{t}$ is a $3$-Lie bracket of $\mathcal{A}^{\ast}$.
\end{prop}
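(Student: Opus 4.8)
The plan is to follow, essentially line for line, the proof of the corresponding Proposition for $3$-Leibniz bialgebras, the simplification being that a $3$-Lie bracket is skew-symmetric, so there is only one type of $3$-Lie structure and hence a single case rather than the three-fold split used there; accordingly I read the statement as asserting that $(\mathcal{A}^{\ast},\mu^{t})$ is a $3$-Lie bialgebra, with $3$-Lie bracket $\gamma^{t}$ on $\mathcal{A}^{\ast}$ and cocommutator $\mu^{t}$, where $\mu$ is the given $3$-Lie bracket of $\mathcal{A}$. First I would start from the $1$-cocycle condition \eqref{1-cocycle3l} for $\gamma$, pair both sides with an arbitrary $\xi_{1}\otimes\xi_{2}\otimes\xi_{3}\in{\mathcal{A}^{\ast}}^{\otimes 3}$, and use \eqref{pairing} to rewrite the left-hand side as $\langle[\xi_{1},\xi_{2},\xi_{3}]_{\ast},\mu(x_{1}\otimes x_{2}\otimes x_{3})\rangle$. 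On the right-hand side I would expand each ${ad}^{(3)}$ into its three one-slot summands according to \eqref{semi-rep}, introduce the coadjoint action of $\mathcal{A}$ on $\mathcal{A}^{\ast}$ given by $\langle\mathrm{ad}_{(x_{2},x_{3})}^{\ast}\xi,y\rangle:=\langle\xi,[x_{2},x_{3},y]\rangle$ (skew-symmetric in $x_{2},x_{3}$), and transpose every $\mathrm{ad}$ onto the dual tensor factors; this turns the right-hand side into a sum of nine terms of the shape $\langle[\mathrm{ad}_{(x_{2},x_{3})}^{\ast}\xi_{1},\xi_{2},\xi_{3}]_{\ast},x_{1}\rangle$ together with its cyclic and slot variants, which is the direct analogue of \eqref{1''}.

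Next I would introduce the second coadjoint action, of $\mathcal{A}^{\ast}$ on $\mathcal{A}\cong({\mathcal{A}^{\ast}})^{\ast}$, given by $\langle\mathrm{ad}_{(\xi_{2},\xi_{3})}^{\ast}x,\eta\rangle:=\langle x,[\xi_{2},\xi_{3},\eta]_{\ast}\rangle$, rewrite the nine pairings accordingly (the analogue of \eqref{1'''}), and regroup them around $\xi_{1},\xi_{2},\xi_{3}$ so that the identity takes the form
\[
\langle[\xi_{1},\xi_{2},\xi_{3}]_{\ast},\mu(x_{1}\otimes x_{2}\otimes x_{3})\rangle=\langle\xi_{1},[\mathrm{ad}_{(\xi_{2},\xi_{3})}^{\ast}x_{1},x_{2},x_{3}]\rangle+\langle\xi_{2},[x_{1},\mathrm{ad}_{(\xi_{3},\xi_{1})}^{\ast}x_{2},x_{3}]\rangle+\langle\xi_{3},[x_{1},x_{2},\mathrm{ad}_{(\xi_{1},\xi_{2})}^{\ast}x_{3}]\rangle.
\]
Transposing $\mu$ to $\mu^{t}$ and reading off the coefficient of $x_{1}\otimes x_{2}\otimes x_{3}$ then yields
\[
\mu^{t}[\xi_{1},\xi_{2},\xi_{3}]_{\ast}={ad}^{(3)}_{(\xi_{2},\xi_{3},\hat{\xi}_{1})}\mu^{t}(\xi_{1})+{ad}^{(3)}_{(\xi_{3},\xi_{1},\hat{\xi}_{2})}\mu^{t}(\xi_{2})+{ad}^{(3)}_{(\xi_{1},\xi_{2},\hat{\xi}_{3})}\mu^{t}(\xi_{3}),
\]
which is exactly the $1$-cocycle condition \eqref{1-cocycle3l} for $(\mathcal{A}^{\ast},\mu^{t})$, with $\gamma^{t}$ the $3$-Lie bracket on $\mathcal{A}^{\ast}$ and the semi-representation of \eqref{semi-rep} built from it.

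To finish, the other bialgebra axiom for $(\mathcal{A}^{\ast},\mu^{t})$ — that $(\mu^{t})^{t}$ be a $3$-Lie bracket on $({\mathcal{A}^{\ast}})^{\ast}\cong\mathcal{A}$ — holds trivially, since $(\mu^{t})^{t}=\mu$ and $\mu$ is a $3$-Lie bracket by hypothesis, while $\gamma^{t}$ is a $3$-Lie bracket on $\mathcal{A}^{\ast}$ by the definition of a $3$-Lie bialgebra; therefore $(\mathcal{A}^{\ast},\mu^{t})$ is a $3$-Lie bialgebra.

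The one genuinely delicate point I expect is the bookkeeping in the final regrouping: one must verify that transposing twice the cyclic slot-pattern $(x_{2},x_{3},\hat{x}_{1})$, $(x_{3},x_{1},\hat{x}_{2})$, $(x_{1},x_{2},\hat{x}_{3})$ of \eqref{1-cocycle3l} reproduces the same cyclic pattern in the $\xi_{i}$ and not a permuted one, and that the skew-symmetry of $[\cdot,\cdot,\cdot]_{\ast}$ is respected at each step — here one repeatedly uses that a cyclic permutation of the three slots of a $3$-Lie bracket is an even permutation, hence sign-preserving — a feature with no counterpart in the $3$-Leibniz computation, where the bracket carried no symmetry. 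The remaining ingredients (bilinearity of the pairing, nondegeneracy to pass from the paired identity back to the tensor identity, and the observation that although $\mathcal{A}^{\otimes 3}$ here carries only a semi-representation this is all the cocycle identities \eqref{semi-rep} and \eqref{1-cocycle3l} ever invoke) are routine.
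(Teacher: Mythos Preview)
Your overall strategy is exactly what the paper intends (its own proof reads only ``By the use of \eqref{pairing} and \eqref{1-cocycle3l} the proof is clear'', implicitly pointing back to the $3$-Leibniz computation you are emulating). However, your intermediate displayed identity is wrong: after dualising, the nine terms do \emph{not} collapse to three. The correct regrouping around $\xi_{1},\xi_{2},\xi_{3}$ is
\[
\langle[\xi_{1},\xi_{2},\xi_{3}]_{\ast},[x_{1},x_{2},x_{3}]\rangle
=\sum_{\mathrm{cyc}(\xi_{1},\xi_{2},\xi_{3})}\Bigl\langle\xi_{1},\,[\mathrm{ad}_{(\xi_{2},\xi_{3})}^{\ast}x_{1},x_{2},x_{3}]+[x_{1},\mathrm{ad}_{(\xi_{2},\xi_{3})}^{\ast}x_{2},x_{3}]+[x_{1},x_{2},\mathrm{ad}_{(\xi_{2},\xi_{3})}^{\ast}x_{3}]\Bigr\rangle,
\]
i.e.\ each $\xi_{i}$-block carries all three $x$-slot insertions, not only the ``diagonal'' one you wrote. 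It is precisely this full three-term sum inside each block that, after transposing $\mu$, becomes $\mathrm{ad}^{(3)}_{(\xi_{2},\xi_{3},\hat{\xi}_{1})}\mu^{t}(\xi_{1})$ --- the three summands of $\mathrm{ad}^{(3)}$ in \eqref{semi-rep} match the three $x$-slots. Your three-term version, read literally, would instead yield
\[
(\mathrm{ad}_{(\xi_{2},\xi_{3},\hat{\xi}_{1})}\otimes 1\otimes 1)\mu^{t}(\xi_{1})+(1\otimes\mathrm{ad}_{(\xi_{3},\xi_{1},\hat{\xi}_{2})}\otimes 1)\mu^{t}(\xi_{2})+(1\otimes 1\otimes\mathrm{ad}_{(\xi_{1},\xi_{2},\hat{\xi}_{3})})\mu^{t}(\xi_{3}),
\]
which is not the $3$-Lie $1$-cocycle \eqref{1-cocycle3l}. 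With this correction the rest of your argument --- the final displayed cocycle identity for $\mu^{t}$, the verification that $(\mu^{t})^{t}=\mu$ supplies the second bialgebra axiom, and your remarks on cyclic sign-bookkeeping --- goes through unchanged.
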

\begin{proof}
By  the use of \eqref{pairing} and \eqref{1-cocycle3l} the proof is clear.
\end{proof}
\begin{thm}
{\textit{Let $(\mathcal{A},\gamma)$ be a $3$-Lie bialgebra and $\mathfrak{g}=\mathcal{A}\otimes\mathcal{A}$ be its associated Leibniz algebra then there exist a linear map
$\delta:\mathfrak{g}\longrightarrow\mathfrak{g}\otimes\mathfrak{g}$ where it defines a Leibniz bialgebra structure on $\mathfrak{g}$. Conversely, if $(\mathfrak{g},\delta)$ is a Leibniz bialgebra such that $\mathcal{A}$ be a $3$-Lie algebra and $\mathfrak{g}=\mathcal{A}\otimes\mathcal{A}$ then there exist a
linear map $\gamma:\mathcal{A}\longrightarrow \mathcal{A}\otimes \mathcal{A}\otimes\mathcal{A}$ such that it defines a $3$-Lie bialgebra structure on $\mathcal{A}$.}}
\end{thm}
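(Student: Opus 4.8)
The plan is to mirror the proof of Theorem \ref{corresponding}, exploiting that a $3$-Lie algebra is in particular a $3$-Leibniz algebra and, by skew-symmetry, simultaneously of all three types; hence its associated Leibniz algebra $\mathfrak{g}=\mathcal{A}\otimes\mathcal{A}$ carries the bracket \eqref{Leibnizbracket3} and is a \emph{left} Leibniz algebra, so the cocycle identity to be produced for $\delta$ is the left form \eqref{1-cocyclel-lorr}. To set up the data: by the second axiom of a $3$-Lie bialgebra, $\gamma^{t}$ is a $3$-Lie bracket on $\mathcal{A}^{\ast}$, so $(\mathcal{A}^{\ast},\gamma^{t})$ is again a $3$-Lie algebra, and I take $\mathfrak{g}^{\ast}=\mathcal{A}^{\ast}\otimes\mathcal{A}^{\ast}$ to be its associated Leibniz algebra, with bracket
\begin{align*}
[\xi_{1}\otimes\xi_{2},\eta_{1}\otimes\eta_{2}]_{\ast}=[\xi_{1},\xi_{2},\eta_{1}]_{\ast}\otimes\eta_{2}+\eta_{1}\otimes[\xi_{1},\xi_{2},\eta_{2}]_{\ast},
\end{align*}
which automatically obeys the Leibniz identity. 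Defining $\delta$ by $\delta^{t}:=[\,,\,]_{\ast}$ then gives the second axiom of a Leibniz bialgebra for $(\mathfrak{g},\delta)$ for free, so the whole task reduces to showing that $\delta$ is a $1$-cocycle on $\mathfrak{g}$ with values in $\mathfrak{g}\otimes\mathfrak{g}$.

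For that, I would first rewrite $\delta$ in terms of $\gamma$ just as in the proof of Theorem \ref{corresponding}: substituting $[\xi_{1},\xi_{2},\eta_{1}]_{\ast}=\gamma^{t}(\xi_{1}\otimes\xi_{2}\otimes\eta_{1})$ and reordering tensor factors with flip operators on ${\mathcal{A}^{\ast}}^{\otimes 4}$ writes $\delta^{t}$ as $(\gamma^{t}\otimes I_{\mathcal{A}^{\ast}})\circ\sigma+I_{\mathcal{A}^{\ast}}\otimes\gamma^{t}$ for an explicit composite $\sigma$ of flips; dualizing then gives a formula of the shape
\begin{align*}
\delta(x_{1}\otimes x_{2})=\bigl(\gamma\otimes I_{\mathcal{A}}+\sigma^{t}\circ(I_{\mathcal{A}}\otimes\gamma)\bigr)(x_{1}\otimes x_{2}),
\end{align*}
with $\sigma^{t}$ a composite of the dual flips $\sigma_{12}^{t},\sigma_{23}^{t}$ on $\mathcal{A}^{\otimes 4}$. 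Next, for $X=x_{1}\otimes x_{2}$ and $Y=y_{1}\otimes y_{2}$ I would expand $\delta[X,Y]$ by first applying the left Leibniz bracket \eqref{Leibnizbracket3} inside $\delta$, substituting $\gamma(x_{i})=x_{i}^{1}\otimes x_{i}^{2}\otimes x_{i}^{3}$, and invoking the $3$-Lie $1$-cocycle condition \eqref{1-cocycle3l} for $\gamma$, the aim being to recognize the outcome, after the flips, as
\begin{align*}
\delta[X,Y]=\bigl(1_{\mathfrak{g}}\otimes\mathrm{ad}_{X}^{(l)}+\mathrm{ad}_{X}^{(l)}\otimes 1_{\mathfrak{g}}\bigr)\delta(Y),\qquad\forall X,Y\in\mathfrak{g}.
\end{align*}

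The step I expect to be the main obstacle is exactly this last reorganization, and it has a real subtlety: the map $\rho$ in \eqref{semi-rep} feeding \eqref{1-cocycle3l} is only a \emph{semi}-representation of $\mathcal{A}$ on $\mathcal{A}^{\otimes 3}$, while the left and right actions $\mathrm{ad}^{(l)},\mathrm{ad}^{(r)}$ make $\mathfrak{g}\otimes\mathfrak{g}=\mathcal{A}^{\otimes 4}$ an honest Leibniz module; so one must verify that after the flip operators the $\mathrm{ad}^{(3)}_{(\cdot,\cdot,\widehat{\cdot})}\gamma(\cdot)$ summands of the semi-cocycle identity recombine precisely into the two summands of the genuine Leibniz cocycle identity, the terms that would distinguish a representation from a semi-representation cancelling because $\mathfrak{g}$ is only asked to be a Leibniz (not a Lie) algebra. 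Everything else is formal bookkeeping.

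For the converse I would run the argument backwards. If $(\mathfrak{g},\delta)$ is a Leibniz bialgebra with $\mathfrak{g}=\mathcal{A}\otimes\mathcal{A}$ compatible with the tensor decomposition, then the Leibniz bracket $\delta^{t}$ on $\mathfrak{g}^{\ast}=\mathcal{A}^{\ast}\otimes\mathcal{A}^{\ast}$ has the ``associated-Leibniz-algebra'' shape, hence is induced by a unique bracket $\gamma^{t}:{\mathcal{A}^{\ast}}^{\otimes 3}\to\mathcal{A}^{\ast}$; transposing produces $\gamma:\mathcal{A}\to\mathcal{A}^{\otimes 3}$. One then checks that the Filippov identity for $\gamma^{t}$ is equivalent to $\delta^{t}$ satisfying the Leibniz identity, and that the $3$-Lie $1$-cocycle condition \eqref{1-cocycle3l} for $\gamma$ follows from the Leibniz $1$-cocycle condition for $\delta$ by reversing the flip-operator identities established above; hence $(\mathcal{A},\gamma)$ is a $3$-Lie bialgebra.
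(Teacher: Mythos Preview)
Your proposal is correct and matches the paper's approach: the paper's entire proof is the single sentence ``From theorem \ref{corresponding} the proof is clear,'' so your plan to mirror that theorem's construction of $\delta$ via flip operators and then verify the Leibniz $1$-cocycle identity is exactly what is intended. You are in fact more careful than the paper, since you explicitly flag the semi-representation subtlety---the $3$-Lie cocycle \eqref{1-cocycle3l} is not literally one of the six $3$-Leibniz forms \eqref{1-cocycle1}--\eqref{1-cocycle6}---which the one-line citation glosses over.
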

\begin{proof}
From  theorem \ref{corresponding} the proof is clear.
\end{proof}	
\subsection{$3$-Lie bialgebra in terms of structure constants; some examples}
	
Here we rewrite the $1$-cocycle condition \eqref{1-cocycle3l} in terms of structure constants of $3$-Lie algebras $\mathcal{A}$ and $\mathcal{A}^{\ast}$.
Note that in this case we have \eqref{st-con} and \eqref{cocommutator} same as $3$-Leibniz algebra with a little difference. In this case the structure constants are antisymmetric.
\begin{align}
\gamma[x_i, x_s,x_n]={ad}^{(3)}_{(x_s,x_n,\hat{x}_i)}\gamma(x_i)
+{ad}^{(3)}_{(x_n,x_i,\hat{x}_s)}\gamma(x_s)
+{ad}^{(3)}_{(x_i, x_s,\hat{x}_n)}\gamma(x_n).\label{1-cocycle3ls}
\end{align}
Using  \eqref{pairing} and \eqref{st-con} in \eqref{1-cocycle3ls}  we have
\begin{align}
\begin{split}
f_{isn}\,^{p}\widetilde{f}^{jkm}\,_{p}&=
f_{j'sn}\,^{j}\widetilde{f}^{j'km}\,_{i}
+f_{k'sn}\,^{k}\widetilde{f}^{jk'm}\,_{i}
+f_{m'sn}\,^{m}\widetilde{f}^{jkm'}\,_{i}\\
&\;\; +f_{ij'n}\,^{j}\widetilde{f}^{j'km}\,_{s}
+f_{ik'n}\,^{k}\widetilde{f}^{jk'm}\,_{s}
+f_{im'n}\,^{m}\widetilde{f}^{jkm'}\,_{s}\\
&\;\; +f_{isj'}\,^{j}\widetilde{f}^{j'km}\,_{n}
+f_{isk'}\,^{k}\widetilde{f}^{jk'm}\,_{n}
+f_{ism'}\,^{m}\widetilde{f}^{jkm'}\,_{n}.\label{1-cocycle3lconstant}
\end{split}
\end{align}
\begin{dfn}
If the structure constants of the $3$-Lie algebras $\mathcal{A}$ and $\mathcal{A}^{\ast}$ satisfy in relation \eqref {1-cocycle3lconstant} then $\mathcal{A}$ and $\mathcal{A}^{\ast}$ will construct a $3$-Lie bialgebra.
\end{dfn}
To use this relation in  calculations, one can rewrite it in terms of the matrix form  by  use of the following adjoint representations:
\begin{align}
\begin{split}
& f_{isn}\,^{p}=(\chi_{is})_{n}\,^{p}=
(\mathcal{Y}_{i}\,^{p})_{sn}=f_{sni}\,^{p}=(\chi_{sn})_{i}\,^{p}=(\mathcal{Y}_{s}\,^{p})_{ni}\\
& =f_{nis}\,^{p}=(\chi_{ni})_{s}\,^{p}=
(\mathcal{Y}_{n}\,^{p})_{is}=-f_{sin}\,^{p}=-(\chi_{si})_{n}\,^{p}=-(\mathcal{Y}_{s}\,^{p})_{in}\\
& =-f_{nsi}\,^{p}=-(\chi_{ns})_{i}\,^{p}=
-(\mathcal{Y}_{n}\,^{p})_{si}
=f_{ins}\,^{p}=-(\chi_{in})_{s}\,^{p}=
-(\mathcal{Y}_{i}\,^{p})_{ns},
\end{split}
\end{align}
\begin{align}
\begin{split}
& \widetilde{f}^{jkm}\,_{p}=(\widetilde{\chi}^{jk})^{m}
\,_{p}=(\widetilde{\mathcal{Y}}^{j}\,_{p})^{km}
=\widetilde{f}^{kmj}\,_{p}=(\widetilde{\chi}^{km})^{j}
\,_{p}=(\widetilde{\mathcal{Y}}^{k}\,_{p})^{mj}\\
& =\widetilde{f}^{mjk}\,_{p}=(\widetilde{\chi}^{mj})^{k}
\,_{p}=(\widetilde{\mathcal{Y}}^{m}\,_{p})^{jk}
=-\widetilde{f}^{kjm}\,_{p}=-(\widetilde{\chi}^{kj})^{m}
\,_{p}=-(\widetilde{\mathcal{Y}}^{k}\,_{p})^{jm}\\
& =-\widetilde{f}^{mkj}\,_{p}=-(\widetilde{\chi}^{mk})^{j}
\,_{p}=-(\widetilde{\mathcal{Y}}^{m}\,_{p})^{kj}
=-\widetilde{f}^{jmk}\,_{p}=-(\widetilde{\chi}^{jm})^{k}
\,_{p}=-(\widetilde{\mathcal{Y}}^{j}\,_{p})^{mk}.
\end{split}
\end{align}
 Relation \eqref {1-cocycle3lconstant} has the following matrix form
\begin{align}
\begin{split}
\chi_{ns}(\widetilde{\chi}^{mk})^{t}&=(\widetilde{\chi}^{mk})^{t}(\chi_{ns})-(\chi_{k's})_{n}\,^{k}(\widetilde{\chi}^{mk'})^{t}-(\widetilde{\chi}^{m'k})^{t}(\chi_{m's})_{n}\,^{m}\\
& -(\widetilde{\chi}^{j'k})^{m}\,_{s}(\chi_{nj'})+\mathcal{Y}_{n}\,^{k}
(\widetilde{\mathcal{Y}}^{m}\,_{s})^{t}+\mathcal{Y}_{n}\,^{p}\widetilde{\mathcal{Y}}^{k}\,_{s}\\
& -(\widetilde{\chi}^{j'k})^{m}\,_{n}\chi_{j's}
-\mathcal{Y}_{s}\,^{k}(\widetilde{\mathcal{Y}}^{m}\,_{n})^{t}+\mathcal{Y}_{s}\,^{m}(\widetilde{\mathcal{Y}}^{k}\,_{s})^{t},
\end{split}
\end{align}
where in the above relation $t$ stands for  the transpose of a matrix.
Fundamental identity for $3$-Lie algebra $\mathcal{A}^{\ast}$ in terms of the structure constant has the  following form
\begin{align}
\widetilde{f}^{nsj}\,_p\widetilde{f}^{mkp}\,_i=
\widetilde{f}^{mkn}\,_p\widetilde{f}^{psj}\,_i
+\widetilde{f}^{mks}\,_p\widetilde{f}^{npj}\,_i
+\widetilde{f}^{mkj}\,_p\widetilde{f}^{nsp}\,_i,
\end{align}
with the matrix form as
\begin{align}
\widetilde{\chi}^{ns}\widetilde{\chi}^{mk}=(\widetilde{\chi}^{mk})^{n}\,_p
\widetilde{\chi}^{ps}+(\widetilde{\chi}^{mk})^{s}\,_p\widetilde{\chi}^{np}
+\widetilde{\chi}^{mk}\widetilde{\chi}^{ns}.\label{matdual}
\end{align}
Now for obtaining the dual of $\mathcal{A}$ one must solve the following equation with \eqref{matdual}.
\begin{align}
\begin{split}
&(\widetilde{\chi}^{mk})^{t}(\chi_{ns})-(\chi_{k's})_{n}\,^{k}(\widetilde{\chi}^{mk'})^{t}-(\widetilde{\chi}^{m'k})^{t}(\chi_{m's})_{n}\,^{m}
-(\widetilde{\chi}^{j'k})^{m}\,_{s}(\chi_{nj'})+\mathcal{Y}_{n}\,^{k}(\widetilde{\mathcal{Y}}^{m}\,_{s})^{t}+\mathcal{Y}_{n}\,^{p}\widetilde{\mathcal{Y}}^{k}\,_{s}\\
& -(\widetilde{\chi}^{j'k})^{m}\,_{n}\chi_{j's}
-\mathcal{Y}_{s}\,^{k}(\widetilde{\mathcal{Y}}^{m}\,_{n})^{t}+\mathcal{Y}_{s}\,^{m}(\widetilde{\mathcal{Y}}^{k}\,_{s})^{t}-\chi_{ns}(\widetilde{\chi}^{mk})^{t}=0,\\
&(\widetilde{\chi}^{mk})^{n}\,_{p}(\widetilde{\chi}^{ps})+
(\widetilde{\chi}^{mk})^{s}\,_{p}(\widetilde{\chi}^{np})+(\widetilde{\chi}^{mk})(\widetilde{\chi}^{ns})-(\widetilde{\chi}^{ns})(\widetilde{\chi}^{mk})=0.\label{eq3lie}
\end{split}
\end{align}
\begin{ex}
We consider the following four dimensional $3$-Lie algebra \cite{Bai2}
\begin{align*}
[e_2,e_3,e_4]=e_1,\qquad [e_1,e_3,e_4]=e_2,
\end{align*}
By solving the system of equations \eqref{eq3lie} we obtain the following $3$-Lie algebras as dual of $\mathcal{A}$
\begin{align*}
[\widetilde{e}^1,\widetilde{e}^2,\widetilde{e}^4]_\ast=b\widetilde{e}^1,\quad
[\widetilde{e}^3,\widetilde{e}^2,\widetilde{e}^4]_\ast=b\widetilde{e}^3,
\end{align*}
\begin{align*}
[\widetilde{e}^2,\widetilde{e}^3,\widetilde{e}^1]_\ast=b\widetilde{e}^2,\quad
[\widetilde{e}^4,\widetilde{e}^3,\widetilde{e}^1]_\ast=b\widetilde{e}^4,
\end{align*}
where $b$ is any non zero real number.
\end{ex}
\section{Conclusion}
In this paper, we defined the $3$-Leibniz and $3$-Lie bialgebras using cohomology of $3$-Leibniz and $3$-Lie algebras. Some theorems have been given , in particular, we have proven the correspondence between $3$-Leibniz bialgebra and its associated Leibniz bialgebras. There are some open problems related to this work. The definition of $r$-matrix and Yang-Baxter equation were related to $3$-Leibniz and $3$-Lie bialgebra. Applying the definition of $3$-Lie bialgebra in $M$ theory \cite{Bag1,Bag2,Gus} as a physical application is our future\cite{Aali}. We know that for the Nambu-Lie group $G$ \cite{Vai} on the dual space $\mathfrak{g}^\ast$ of the Lie algebra $\mathfrak{g}$ we have an $n$-Lie algebra structure. One can also investigate the concept of Nambu-Poisson-Lie group and the relation between $3$-Lie bialgebra and Lie bialgebra on the space $\mathfrak{g}$ \cite{Reza2}.
	
\subsection*{Acknowledgment}
\vspace{3mm}  We would like to express our deepest gratitude to M. Akbari-Moghanjoughi for carefully reading the manuscript and his useful comments and also we want to thank Sh. Ghanizadeh for giving impetus to this work.
This research was supported by Azarbaijan Shahid Madani University (Research Fund No. 27.d.1518).

\end{document}